\documentclass[11pt,leqno]{article}

\usepackage[active]{srcltx}

\usepackage{kerkis}
\usepackage{amsfonts,latexsym,amsmath,amssymb,amsthm}
\usepackage{dsfont}
\usepackage{fullpage}
\newtheorem{theorem}{Theorem}[section]
\newtheorem{lemma}[theorem]{Lemma}

\newtheorem{corollary}[theorem]{Corollary}
\newtheorem{remark}[theorem]{Remark}

\theoremstyle{definition}

\theoremstyle{remark}

\newtheorem*{note*}{Note}

\numberwithin{equation}{section}

\makeatletter
\newcommand{\rank}{\mathop{\operator@font rank}}
\newcommand{\conv}{\mathop{\operator@font conv}}
\newcommand{\vol}{\mathop{\operator@font vol}}
\newcommand{\onetagright}{\tagsleft@false}
\makeatother

\newcommand{\ls}{\leqslant}
\newcommand{\gr}{\geqslant}
\renewcommand{\epsilon}{\varepsilon}

\usepackage{color}

\usepackage{hyperref}

\begin{document}
\small

\title{\bf Inequalities for the quermassintegrals of sections of convex bodies}

\medskip

\author{Dimitris--Marios Liakopoulos}

\date{}

\maketitle

\begin{abstract}\footnotesize We provide general estimates which compare the quermassintegrals of a convex body
$K$ in ${\mathbb R}^n$ with the averages of the corresponding quermassintegrals of the $k$-codimensional sections
of $K$ over $G_{n,n-k}$. An example is the inequality
$$\alpha_{n,k,j}\frac{W_j(K)}{|K|}\ls\int_{G_{n,n-k}}\frac{W_j(K\cap F)}{|K\cap F|}d\nu_{n,n-k}(F)
\ls \beta_{n,k,j}\frac{W_j(K)}{|K|}$$
where the constants $\alpha_{n,k,j}$ and $\beta_{n,k,j}$ depend only on $n,k$ and $j$, which holds true for any centrally symmetric
convex body $K$ in ${\mathbb R}^n$ and any $0\ls j\ls n-k-1\ls n-1$. Using these estimates we obtain some positive
results for suitable versions of the slicing problem for the quermassintegrals of a convex body.
\end{abstract}

\section{Introduction}\label{section-1}

In this article we provide a number of general double-sided inequalities comparing the quermassintegrals of a convex body
$K$ in ${\mathbb R}^n$ with the averages of the corresponding quermassintegrals of the $k$-codimensional sections
of $K$ over $G_{n,n-k}$. The starting point for studying this type of question is the slicing problem
which asks if there exists an absolute constant $C_1>0$ such that for every $n\gr 2$ and every
convex body $K$ in ${\mathbb R}^n$ with barycenter at the origin (we call these convex bodies centered)
one has
\begin{equation*}|K|^{\frac{n-1}{n}}\ls C_1\,\max_{\xi\in S^{n-1}}\,|K\cap \xi^{\perp }|\end{equation*}
where $|\cdot|$ denotes volume in the appropriate dimension. It is well-known that this problem is equivalent to
the question if there exists an absolute constant $C_2>0$ such that
\begin{equation*}L_n:= \max\{ L_K:K\ \hbox{is an isotropic convex body in}\ {\mathbb R}^n\}\ls C_2\end{equation*}
for all $n\gr 1$, where $L_K$ is the isotropic constant of $K$ (we refer the reader to \cite{BGVV-book} for
background information on isotropic convex bodies). Bourgain proved in \cite{Bourgain-1991} that $L_n\ls c_1\sqrt[4]{n}\log\! n$, and Klartag \cite{Klartag-2006} improved this bound to $L_n\ls c_2\sqrt[4]{n}$. After breakthrough work of Chen \cite{YChen}, Klartag and Lehec have
recently established in \cite{Klartag-Lehec-2022} the polylogarithmic
in the dimension bound $L_n\ls c_3(\ln n)^4$. Even more recently, the method of \cite{Klartag-Lehec-2022} was slightly refined
in \cite{JLV} where it was shown that $L_n\ls c_4(\ln n)^{2.2226}.$ A closer examination of the equivalence of the two questions shows that
\begin{equation*}|K|^{\frac{n-1}{n}}\ls c_3L_n\,\max_{\xi\in S^{n-1}}\,|K\cap \xi^{\perp }|\end{equation*}
for every centered convex body $K$ in ${\mathbb R}^n$. The natural generalization of the problem,
the so-called lower dimensional slicing problem, can be formulated as follows: Let $1\ls k\ls n-1$ and let $\alpha_{n,k}$
be the smallest positive constant $\alpha >0$ such that
\begin{equation*}|K|^{\frac{n-k}{n}}\ls \alpha^k\max_{H\in G_{n,n-k}}|K\cap H|\end{equation*}
for every centered convex body $K$ in ${\mathbb R}^n$, where $G_{n,k}$ is the Grassmann manifold of all $k$-dimensional subspaces of ${\mathbb R}^n$.
Then the question is if there exists an absolute constant $C_4>0$ such that $\alpha_{n,k}\ls C_3$ for all $n$ and $k$.

The slicing problem for surface area, instead of volume, is the question if there exists a constant $\alpha_{n,k}$ such that
\begin{equation}\label{eq:slicing-surface-lower}S(K)\ls\alpha_{n,k}^k|K|^{\frac{k}{n}}\max_{F\in G_{n,n-k}}S(K\cap F)\end{equation}
for every centrally symmetric convex body $K$ in ${\mathbb R}^n$, where $S(A)$ denotes the surface area of
a convex body $A$ in the appropriate dimension. A negative answer to this question was given in \cite{Brazitikos-Liakopoulos-TAMS}.
In fact, this was also done for the natural generalization of the problem in which
surface area is replaced by quermassintegrals of other orders. Recall that if $K$ is a convex body in ${\mathbb R}^n$ then
the function $|K+\lambda B_2^n|$, where $B_2^n$ is the Euclidean unit ball in ${\mathbb R}^n$, is a polynomial in $\lambda\in [0,\infty )$.
We have
\begin{equation*}|K+\lambda B_2^n|=\sum_{j=0}^n \binom{n}{j}W_j(K)\,\lambda^j,\end{equation*}
where $W_j(K)$ is the $j$-th quermassintegral of $K$. This is the classical Steiner formula
(see Section~\ref{section-2} for background information on mixed volumes and quermassintegrals).
The surface area $S(K)$ of $K$ satisfies
\begin{equation*}S(K)=nW_1(K).\end{equation*}
Given $n\gr 2$, $1\ls k\ls n$ and $0\ls j\ls n-k-1$, one may ask if there exists a constant $\alpha_{n,k,j}>0$ such that
\begin{equation}\label{eq:slicing-quermassintegral}W_j(K)\ls\alpha_{n,k,j}^k \,|K|^{\frac{k}{n}}\,\max_{F\in G_{n,n-k}}W_j(K\cap F)\end{equation}
for every centered (or centrally symmetric) convex body $K$ in ${\mathbb R}^n$.
It was proved in \cite{Brazitikos-Liakopoulos-TAMS} that one cannot expect an upper bound
of the form \eqref{eq:slicing-quermassintegral} even if sections are replaced by projections. More precisely,
if $n\gr 2$, $1\ls k\ls n$ and $0\ls j\ls n-k-1$ then for every $\alpha >0$ there exists a convex body $K$ in ${\mathbb R}^n$
such that
\begin{equation}\label{eq:main-negative}W_j(K)>\alpha \,|K|^{\frac{k}{n}}\,\max_{F\in G_{n,n-k}}W_j(P_F(K))
\gr \alpha \,|K|^{\frac{k}{n}}\,\max_{F\in G_{n,n-k}}W_j(K\cap F),\end{equation}
where $P_F(K)$ denotes the orthogonal projection of $K$ onto $F$.

In Section~\ref{section-3} we explore the possibility to obtain some positive results for variants of this question. Our first main
result is the next theorem for the class of centered convex bodies.

\begin{theorem}\label{th:intro-1}Let $K$ be a centered convex body in ${\mathbb R}^n$. For every $0\ls j\ls n-k-1\ls n-1$
we have that
$$\alpha_{n,k,j}\left(\frac{n-k-j+1}{n+1}\right)^{n-k-j}W_j(K)
\ls \int_{G_{n,n-k}}|P_{F^{\perp }}(K)|W_j(K\cap F)d\nu_{n,n-k}(F)\ls \alpha_{n,k,j}\binom{n-j}{k}W_j(K),$$
where $\alpha_{n,k,j}=\frac{\omega_{n-k}\omega_{n-j}}{\omega_{n-k-j}\omega_n}$.
\end{theorem}

Using the left hand side inequality of Theorem~\ref{th:intro-1} one immediately gets
$$\alpha_{n,k,j}\left(\frac{n-k-j+1}{n+1}\right)^{n-k-j}W_j(K)
\ls s_k(K)\cdot\max_{F\in G_{n,n-k}}W_j(K\cap F),$$
where
$$s_k(K):=\int_{G_{n,k}}|P_{F}(K)|d\nu_{n,k}(F).$$
By Kubota's formula (see \eqref{eq:kubota} in Section~\ref{section-2}) we have $s_k(K)=\frac{\omega_{k}}{\omega_n}W_{n-k}(K)$, and hence we obtain:

\begin{corollary}\label{cor:intro}Let $K$ be a centered convex body in ${\mathbb R}^n$. For every $0\ls j\ls n-k-1\ls n-1$
we have that
\begin{equation}\label{eq:positive-1}\gamma_{n,k,j}\left(\frac{n-k-j+1}{n+1}\right)^{n-k-j}W_j(K)
\ls W_{n-k}(K)\cdot\max_{F\in G_{n,n-k}}W_j(K\cap F),\end{equation}
where $\gamma_{n,k,j}=\frac{\omega_{n-k}\omega_{n-j}}{\omega_{n-k-j}\omega_k}$.
\end{corollary}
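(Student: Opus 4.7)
The plan is to read off the corollary directly from the left-hand inequality in Theorem~\ref{th:intro-1} via two soft manoeuvres: (i) pass from an average to a maximum, and (ii) identify the remaining average $\int_{G_{n,n-k}}|P_{F^{\perp}}(K)|\,d\nu_{n,n-k}(F)$ as a quermassintegral via Kubota's formula. Nothing genuinely new needs to be proved.

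First I would apply Theorem~\ref{th:intro-1} (lower bound) and pull the factor $W_j(K\cap F)$ out of the integral using
$W_j(K\cap F)\ls \max_{F'\in G_{n,n-k}}W_j(K\cap F')$. This yields
$$
\alpha_{n,k,j}\left(\frac{n-k-j+1}{n+1}\right)^{n-k-j}W_j(K)
\ls \Big(\max_{F\in G_{n,n-k}}W_j(K\cap F)\Big)\int_{G_{n,n-k}}|P_{F^{\perp}}(K)|\,d\nu_{n,n-k}(F).
$$
Then I would use the natural bijection $F\mapsto F^{\perp}$ between $G_{n,n-k}$ and $G_{n,k}$ (which preserves the Haar measure) to rewrite the integral on the right as $s_k(K)=\int_{G_{n,k}}|P_F(K)|\,d\nu_{n,k}(F)$. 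Kubota's identity \eqref{eq:kubota} gives $s_k(K)=\frac{\omega_k}{\omega_n}W_{n-k}(K)$, so inserting this into the displayed inequality and dividing through by $\omega_k/\omega_n$ produces exactly
$$
\frac{\omega_n}{\omega_k}\,\alpha_{n,k,j}\left(\frac{n-k-j+1}{n+1}\right)^{n-k-j}W_j(K)
\ls W_{n-k}(K)\cdot\max_{F\in G_{n,n-k}}W_j(K\cap F).
$$
Finally I would check the constant: $\frac{\omega_n}{\omega_k}\alpha_{n,k,j}=\frac{\omega_n}{\omega_k}\cdot\frac{\omega_{n-k}\omega_{n-j}}{\omega_{n-k-j}\omega_n}=\frac{\omega_{n-k}\omega_{n-j}}{\omega_{n-k-j}\omega_k}=\gamma_{n,k,j}$, matching the statement.

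There is no real obstacle here; the entire content sits in Theorem~\ref{th:intro-1}. The only points worth flagging are the invariance of Haar measure under orthogonal complementation (so the reindexing $F\leftrightarrow F^{\perp}$ is legitimate) and the correct bookkeeping of the $\omega$-factors when combining $\alpha_{n,k,j}$ with $\omega_k/\omega_n$ to obtain $\gamma_{n,k,j}$. Both are routine.
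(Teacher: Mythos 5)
Your proposal is correct and follows exactly the route the paper takes: bound $W_j(K\cap F)$ by its maximum inside the integral from Theorem~\ref{th:intro-1}, reindex $F\mapsto F^{\perp}$ to identify the remaining integral as $s_k(K)$, and invoke Kubota's formula $s_k(K)=\frac{\omega_k}{\omega_n}W_{n-k}(K)$. The bookkeeping $\frac{\omega_n}{\omega_k}\alpha_{n,k,j}=\gamma_{n,k,j}$ is also correct.
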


This estimate, which is in the spirit of results from \cite{Brazitikos-Liakopoulos-TAMS}, shows that we can have some
version of \eqref{eq:slicing-quermassintegral} if we make the additional assumption that the natural parameter
$W_{n-k}^{1/k}(K)/|K|^{1/n}$ of the body is well-bounded.

A different variant of \eqref{eq:positive-1} is obtained in \cite{Brazitikos-Liakopoulos-TAMS}, which involves the parameter
$$t(K)=\left(\frac{|K|}{|r(K)B^n_2|}\right)^{\frac{1}{n}}$$
where $r(K)$ is the inradius of $K$, i.e. the largest value of $r>0$ for which there exists $x_0\in K$ such that $x_0+rB_2^n\subseteq K$.
It is proved in \cite{Brazitikos-Liakopoulos-TAMS} that if $K$ is a convex body in $\mathbb{R}^n$ with $0\in {\rm int}(K)$
then, for all $1\ls j\ls n-k\ls n-1$ we have that
\begin{equation}\label{eq:positive-2}\frac{W_j(K)}{|K|}\ls t(K)^j\max_{F\in G_{n,n-k}}\frac{W_j(K\cap F)}{|K\cap F|}.\end{equation}
Here, we improve \eqref{eq:positive-2} in two ways. First, we compare $\frac{W_j(K)}{|K|}$ with the average of
$\frac{W_j(K\cap F)}{|K\cap F|}$ over all $F\in G_{n,n-k}$. Second, we show that the two quantities are equivalent up to constants
depending only on the dimensions $n,k$ and $j$, with no dependence on any other parameter of the body.

\begin{theorem}\label{th:intro-2}Let $K$ be a centrally symmetric convex body in ${\mathbb R}^n$. For any $0\ls j\ls n-k-1\ls n-1$
we have that
$$\alpha_{n,k,j}\binom{n}{k}^{-1}\frac{W_j(K)}{|K|}\ls\int_{G_{n,n-k}}\frac{W_j(K\cap F)}{|K\cap F|}d\nu_{n,n-k}(F)
\ls \alpha_{n,k,j}\binom{n-j}{k}\frac{W_j(K)}{|K|}.$$
\end{theorem}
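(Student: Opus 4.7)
The plan is to deduce Theorem~\ref{th:intro-2} from Theorem~\ref{th:intro-1} by replacing the projection volume $|P_{F^\perp}(K)|$ appearing there with $|K|/|K\cap F|$, at the cost of a multiplicative error of at most $\binom{n}{k}$. The bridge is a Rogers--Shephard sandwich available for centrally symmetric bodies.

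First I would record this sandwich. For every $F\in G_{n,n-k}$, Brunn's concavity principle combined with Fubini's theorem yields
$$|K|=\int_{F^\perp}|K\cap(F+x)|\,dx\ls|K\cap F|\cdot|P_{F^\perp}(K)|,$$
since for centrally symmetric $K$ the parallel section function $x\mapsto|K\cap(F+x)|$ is maximised at $x=0$. Conversely, the Rogers--Shephard inequality for sections and projections gives $|K\cap F|\cdot|P_{F^\perp}(K)|\ls\binom{n}{k}|K|$, so that $\frac{|K|}{|K\cap F|}\ls|P_{F^\perp}(K)|\ls\binom{n}{k}\frac{|K|}{|K\cap F|}$ for every such $F$.

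The upper bound of Theorem~\ref{th:intro-2} now follows by inserting the lower half of this sandwich into the upper estimate of Theorem~\ref{th:intro-1}:
$$|K|\int_{G_{n,n-k}}\frac{W_j(K\cap F)}{|K\cap F|}d\nu_{n,n-k}(F)\ls\int_{G_{n,n-k}}|P_{F^\perp}(K)|\,W_j(K\cap F)d\nu_{n,n-k}(F)\ls\alpha_{n,k,j}\binom{n-j}{k}W_j(K),$$
after which I divide by $|K|$.

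For the lower bound, a naive use of the left inequality of Theorem~\ref{th:intro-1} would introduce an extra factor $\bigl(\frac{n-k-j+1}{n+1}\bigr)^{n-k-j}$ that is absent from the claim. The main obstacle is therefore to verify that this Fradelizi-type loss can be eliminated in the centrally symmetric case. The loss appears in the proof of Theorem~\ref{th:intro-1} solely when one compares the section of a merely centered body through its centroid with the maximal parallel section; for centrally symmetric $K$ the two coincide, because by the Brunn--Minkowski inequality applied to quermassintegrals inside $F$ the function $x\mapsto W_j(K\cap(F+x))^{1/(n-k-j)}$ is concave on $P_{F^\perp}(K)$, and being even in $x$ it is maximised at $x=0$. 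Re-running the proof of Theorem~\ref{th:intro-1} with this observation in place of the Fradelizi step produces the sharpened estimate
$$\alpha_{n,k,j}W_j(K)\ls\int_{G_{n,n-k}}|P_{F^\perp}(K)|\,W_j(K\cap F)d\nu_{n,n-k}(F),$$
after which inserting the upper half of the sandwich and dividing by $\binom{n}{k}|K|$ closes the argument.
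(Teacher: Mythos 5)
Your proposal is correct and follows essentially the same route as the paper's proof: both arguments replace the Stephen--Yaskin/Fradelizi step in Lemma~\ref{lem:rs} by the observation that for centrally symmetric $K$ the parallel-section quermassintegral is maximised at the origin (the paper phrases this via the inclusion $\frac{1}{2}[K\cap(x+F^{\perp})]+\frac{1}{2}[K\cap(-x+F^{\perp})]\subseteq K\cap F^{\perp}$ and \eqref{eq:Wj-bm}, while you phrase it as concavity plus evenness, which is equivalent), and both then close the argument with Spingarn's inequality \eqref{eq:spingarn} on one side and the elementary bound $|K|\ls|P_{F^{\perp}}(K)|\,|K\cap F|$ on the other.
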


In Section~\ref{section-4} we study a variant of the slicing problem. Our starting point is a surface area variant of the equivalence of the
isomorphic Busemann--Petty problem with the slicing problem:
Assuming that there is a constant $\gamma_n$ such that
if $K$ and $D$ are centrally symmetric convex bodies in $\mathbb{R}^n$ that satisfy
$$S(K\cap\xi^{\perp})\ls S(D\cap\xi^{\perp})$$ for all $\xi\in S^{n-1}$, then $S(K)\ls \gamma_n S(D)$,
one can see that there is some constant $c(n)$ such that
\begin{equation}\label{slicing-15}
S(K)\ls c(n)S(K)^{\frac{1}{n-1}}\max_{\xi\in S^{n-1}} S(K\cap\xi^{\perp})
\end{equation}
for every convex body $K$ in ${\mathbb R}^n$. It was proved in \cite{Brazitikos-Liakopoulos-TAMS}
that an inequality of this type holds true in general. If $K$ is a convex body in ${\mathbb R}^n$ then
$$S(K)\ls A_nS(K)^{\frac{1}{n-1}}\max\limits_{\xi\in S^{n-1}}S(K\cap\xi^{\perp })$$
where $A_n>0$ is a constant depending only on $n$. Actually, the result is first proved for an arbitrary ellipsoid
and then it is extended to any convex body, using John's theorem. Here, we give a direct proof of a more general
result showing that an inequality analogous to \eqref{slicing-15} holds for any $k$ and $j$, where $k$ is the codimension
of the subspaces and $j$ is the order of the quermassintegral that we consider.
Note that \eqref{slicing-15} corresponds to the case $k=j=1$ of the next theorem.

\begin{theorem}\label{th:intro-3}
Let $K$ be a centrally symmetric convex body in $\mathbb{R}^n$. For every $0\ls j\ls n-k-1\ls n-1$ we have that
\begin{equation*}
W_j(K)^{n-k-j}\ls \alpha_{n,k,j}\max_{F\in G_{n,n-k}}W_j(K\cap F)^{n-j},
\end{equation*}
where $\alpha_{n,k,j}>0$ is a constant depending only on $n,k$ and $j$.
\end{theorem}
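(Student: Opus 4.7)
The plan is to derive Theorem~\ref{th:intro-3} from Theorem~\ref{th:intro-2} by combining it with the Alexandrov inequalities for quermassintegrals and the central symmetry of $K$ (through Brunn's theorem and the Rogers--Shephard inequality).

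First, by the left-hand side of Theorem~\ref{th:intro-2}, replacing the integral with the maximum on $G_{n,n-k}$, there is $F_0\in G_{n,n-k}$ such that
$$W_j(K)\cdot|K\cap F_0|\,\ls\, C_1\,|K|\cdot W_j(K\cap F_0),$$
where $C_1=\alpha_{n,k,j}^{-1}\binom{n}{k}$. The central symmetry of $K$ and Brunn's theorem (the central section has maximal volume among all parallel sections) yield $|K|\ls|P_{F_0^{\perp}}(K)|\cdot|K\cap F_0|$, and hence
$$W_j(K)\,\ls\, C_1\,|P_{F_0^{\perp}}(K)|\cdot W_j(K\cap F_0)\,\ls\, C_1\,|P_{F_0^{\perp}}(K)|\cdot \max_{F\in G_{n,n-k}}W_j(K\cap F).$$

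Next, raising this inequality to the $(n-k-j)$-th power, I would absorb the factor $|P_{F_0^{\perp}}(K)|^{n-k-j}$ into powers of $W_j(K)$ and $\max_F W_j(K\cap F)$ using the Alexandrov inequalities
$$W_j(K)^n\,\gr\, \omega_n^{j}\,|K|^{n-j}\quad\text{and}\quad W_j(K\cap F_0)^{n-k}\,\gr\, \omega_{n-k}^{j}\,|K\cap F_0|^{n-k-j},$$
together with the Rogers--Shephard inequality $|P_{F_0^{\perp}}(K)|\cdot|K\cap F_0|\ls\binom{n}{k}|K|$, valid for centrally symmetric $K$. Using the first Alexandrov bound to replace $|K|$ by a power of $W_j(K)$, the second combined with $W_j(K\cap F_0)\ls\max_F W_j(K\cap F)$ to control $|K\cap F_0|^{n-k-j}$ by a power of $\max_F W_j(K\cap F)$, and the Rogers--Shephard bound to express $|P_{F_0^{\perp}}(K)|$ in terms of these two volumes, the $|K|$ and $|K\cap F_0|$ factors cancel out, yielding the desired bound $W_j(K)^{n-k-j}\ls\alpha_{n,k,j}\max_F W_j(K\cap F)^{n-j}$ with a constant depending only on $n$, $k$ and $j$.

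The main obstacle is the careful bookkeeping of exponents in this last step: one has to verify that the chain of Alexandrov plus Rogers--Shephard inequalities, applied to the pointwise estimate from the first step, leaves exactly $n-k-j$ on the power of $W_j(K)$ and $n-j$ on the power of $\max_F W_j(K\cap F)$; a non-optimal application easily produces the loose exponent $n-k-j$ on $\max_F W_j(K\cap F)$ or an uncontrolled factor $W_{n-k}(K)^{n-k-j}$ of the sort arising in Corollary~\ref{cor:intro}. The central symmetry of $K$ is indispensable both for Brunn's theorem (maximality of the central section) and for the Rogers--Shephard inequality, which is the reason the theorem is stated only for centrally symmetric bodies.
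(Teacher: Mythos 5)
Your first step is sound: the averaging argument from the left-hand side of Theorem~\ref{th:intro-2} plus the Brunn/Fubini bound $|K|\ls |P_{F_0^\perp}(K)|\,|K\cap F_0|$ does give $W_j(K)\ls C_1\,|P_{F_0^\perp}(K)|\cdot\max_{F}W_j(K\cap F)$ for some $F_0$. But the step you flag as ``the main obstacle'' is not a bookkeeping issue --- it is a genuine gap, and the chain of Alexandrov and Rogers--Shephard inequalities you list cannot close it. After raising to the $(n-k-j)$-th power you need $|P_{F_0^\perp}(K)|^{n-k-j}\ls c\,M^{k}$, where $M=\max_F W_j(K\cap F)$ (equivalently, replacing $|P_{F_0^\perp}(K)|$ via Kubota, you need $W_{n-k}(K)^{n-k-j}\ls c\,M^{k}$). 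The Alexandrov inequalities all run the wrong way here: $Q_j(K)$ is decreasing, so Alexandrov gives \emph{lower} bounds on $W_{n-k}(K)$ in terms of $W_j(K)$ and \emph{upper} bounds on $|K\cap F_0|$ in terms of $W_j(K\cap F_0)$, whereas the factor $|K\cap F_0|$ (or equivalently $v=|K\cap F_0|$ in $|P_{F_0^\perp}(K)|\ls\binom{n}{k}|K|/v$) sits in the denominator, so you would need an upper bound on $1/v$, i.e.\ a lower bound on $v$, which Alexandrov does not provide. Worse, the target auxiliary inequality is simply false in general: for the elongated cylinder $K_R=RB_2^k\times B_2^{n-k}$ one has $W_{n-k}(K_R)\sim R^{k}$ and $\max_F W_j(K_R\cap F)\sim R^{k}$, so $W_{n-k}(K_R)^{n-k-j}/M^{k}\sim R^{k(n-k-j-k)}\to\infty$ whenever $j<n-2k$.

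The paper bypasses this obstacle entirely by proving a stronger intermediate statement, Theorem~\ref{th:new-5}: $W_{k+j}(K)\ls c_{n,k,j}^{-1}\max_{F\in G_{n,n-k}}W_j(K\cap F)$, which is a comparison between two quermassintegrals of complementary orders with no spurious $W_{n-k}(K)$ or projection-volume factor at all. That inequality cannot be extracted from Theorem~\ref{th:intro-2}; it is proved from scratch via the generalized Blaschke--Petkantschin formula (Lemma~\ref{lem:new-4}) and the Dann--Paouris--Pivovarov inequality (Theorem~\ref{th:new-1}), through Theorem~\ref{th:new-3}. Once Theorem~\ref{th:new-5} is available, a single application of Alexandrov, $\omega_n^k\,W_j(K)^{n-k-j}\ls W_{k+j}(K)^{n-j}$, immediately gives Theorem~\ref{th:intro-3}. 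So while your use of Alexandrov at the very end matches the paper's, the crucial input is a different and deeper integral-geometric tool, not Theorem~\ref{th:intro-2}.
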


Theorem~\ref{th:intro-3} is a consequence of another double-sided inequality. Using the generalized
Blaschke-Petkantschin formula and integral-geometric results of Dann, Paouris and Pivovarov from \cite{D-P-P}
we show that if $K$ is a centrally symmetric convex body in $\mathbb{R}^n$ then for all $0\ls j\ls n-k-1\ls n-1$ we have that
\begin{equation}\label{eq:BP}
c_{n,k,j}|K|^{n-k}W_{k+j}(K)\ls\int_{K}\cdots\int_{K}W^{(n-k)}_j({\rm conv}\{0,x_1,\ldots ,x_{n-k}\})dx_{n-k}\cdots dx_1\ls\delta_{n,k,j}|K|^{n-k}W_{k+j}(K),
\end{equation}
where $c_{n,k,j}$ and $\delta_{n,k,j}$ are constants depending only on $n,k$ and $j$.

Using a variant of \eqref{eq:BP} we also obtain the next result.

\begin{theorem}\label{th:intro-4}Let $K$ be a centrally symmetric convex body in ${\mathbb R}^n$.
Then, for all $0\ls j\ls n-1$ and $N\gr n+1$ we have that
\begin{equation*}
c_{n,N,j}|K|^NW_j(K)\ls\int_{K}\cdots\int_{K}W_j({\rm conv}\{x_1,\ldots ,x_N\})\,dx_N\cdots dx_1
\end{equation*}
where $c_{n,N,j}$ is a constant depending only on $n,N$ and $j$.
\end{theorem}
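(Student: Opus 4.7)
The plan is to first reduce to the extremal case $N=n+1$, then to invoke Kubota's projection formula to rewrite $W_j$ of the random polytope as an average over $G_{n,n-j}$ of projection volumes, and finally to control these projection volumes via a Sylvester-type lower bound for random polytopes under a log-concave density --- this is the ``variant of \eqref{eq:BP}'' announced in the introduction.

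First I would observe that $W_j$ is monotone under set inclusion (a standard consequence of the monotonicity of mixed volumes); consequently, for every $N\gr n+1$,
$$W_j(\mathrm{conv}\{x_1,\ldots,x_N\})\gr W_j(\mathrm{conv}\{x_1,\ldots,x_{n+1}\}),$$
and integrating out the remaining $N-n-1$ variables reduces matters to proving
$$\int_{K^{n+1}}W_j(\mathrm{conv}\{x_1,\ldots,x_{n+1}\})\,dx_1\cdots dx_{n+1}\gr \tilde c_{n,j}\,|K|^{n+1}W_j(K).$$
To handle this I would apply Kubota's formula $W_j(L)=\frac{\omega_n}{\omega_{n-j}}\int_{G_{n,n-j}}|P_F(L)|\,d\nu_{n,n-j}(F)$ to the random polytope $L=\mathrm{conv}\{x_1,\ldots,x_{n+1}\}$, use $P_F(\mathrm{conv}\{x_i\})=\mathrm{conv}\{P_F(x_i)\}$, and swap the order of integration; the problem is then reduced to showing, for every $F\in G_{n,n-j}$, that
$$I_F:=\int_{K^{n+1}}\bigl|\mathrm{conv}\{P_F(x_1),\ldots,P_F(x_{n+1})\}\bigr|\,dx_1\cdots dx_{n+1}\gr c_{n,j}\,|K|^{n+1}\,|P_F(K)|.$$

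To analyse $I_F$ I would integrate out the $F^\perp$-fibres: with $L_F:=P_F(K)$ and $f_F(y):=|K\cap(y+F^\perp)|$ on $F\cong\mathbb{R}^{n-j}$, one has
$$I_F=\int_{L_F^{n+1}}\bigl|\mathrm{conv}\{y_1,\ldots,y_{n+1}\}\bigr|\prod_{i=1}^{n+1}f_F(y_i)\,dy_1\cdots dy_{n+1}.$$
By Brunn's principle $f_F$ is log-concave on $L_F$, by central symmetry of $K$ it is even, and $\int_{L_F}f_F=|K|$. Since $n+1\gr(n-j)+1$, the points $P_F(x_i)$ generically span $F$, and the required fibre bound $I_F\gr c_{n,j}|K|^{n+1}|L_F|$ is exactly a Sylvester-type estimate for the convex hull of $n+1$ i.i.d. samples drawn from an even log-concave density supported on a convex body in $\mathbb{R}^{n-j}$. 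Granting this estimate, reintegrating over $F\in G_{n,n-j}$ and invoking Kubota once more turns $\int|L_F|\,d\nu_{n,n-j}$ into a constant multiple of $W_j(K)$, which finishes the proof.

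The hard part will be the fibre estimate $I_F\gr c_{n,j}|K|^{n+1}|L_F|$. The preceding steps (inclusion-monotonicity of $W_j$, Kubota, Fubini and the disintegration producing $f_F$) are essentially bookkeeping; the genuinely nontrivial content is the Sylvester-type lower bound under a log-concave weight, which is where the generalized Blaschke--Petkantschin formula together with the integral-geometric machinery of Dann--Paouris--Pivovarov~\cite{D-P-P}, applied inside $F\cong\mathbb{R}^{n-j}$, enters --- this is precisely the variant of \eqref{eq:BP} alluded to just before the statement of the theorem. The evenness of $f_F$, inherited from the central symmetry of $K$, is the feature that prevents the density from concentrating off-centre in $L_F$ and that forces the expected random convex hull to capture a constant fraction of $|L_F|$.
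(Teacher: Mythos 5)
Your approach matches the paper's. Theorem~\ref{th:intro-4} is proved (as Theorem~\ref{th:new-6}) by running the argument of Theorem~\ref{th:new-3} with $d=n-j$, $q=N$ and $C=\mathrm{conv}\{e_1,\ldots,e_N\}$, so that $[x_1,\ldots,x_N]C=\mathrm{conv}\{x_1,\ldots,x_N\}$ --- i.e.\ precisely the Kubota--Fubini disintegration you describe, followed by the Dann--Paouris--Pivovarov bound (Theorem~\ref{th:new-1}) applied to the marginal $f_F=\pi_F(\mathds{1}_K)$ with $\|f_F\|_1=|K|$ and $\|f_F\|_\infty=|K\cap F^\perp|$ (symmetry plus Brunn), and then the Rogers--Shephard/Spingarn inequality $|P_F(K)|\,|K\cap F^\perp|\ls\binom{n}{j}|K|$ to convert $|K|/|K\cap F^\perp|$ into $|P_F(K)|$ before reintegrating over $G_{n,n-j}$. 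Two small corrections. First, the generalized Blaschke--Petkantschin formula (Lemma~\ref{lem:new-4}) plays no role in this theorem: in the paper it is the device for Theorem~\ref{th:intro-3}, where one passes from the convex-hull integral to sections $K\cap F$; the fibre estimate you need follows from D--P--P alone, without Blaschke--Petkantschin. Second, the initial reduction to $N=n+1$ via monotonicity of $W_j$ is unnecessary and loses information: Theorem~\ref{th:new-1} holds for any number $q$ of points, so applying it directly with $q=N$ yields the stated constant $c_{n,N,j}=\frac{1}{\omega_{n-j}\binom{n}{j}}\mathcal F_{C,1}(\mathds{1}_{B^{n-j}_2};N)$, which genuinely depends on $N$ and improves as $N$ grows, whereas your route gives only the $N$-independent $c_{n,n+1,j}$.
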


Theorem~\ref{th:intro-4} is related to a result of Hartzoulaki and Paouris from \cite{Hartzoulaki-Paouris-2003}
which asserts that, among all convex bodies $K$ of volume $1$ in ${\mathbb R}^n$, the expected value
$$\int_{K}\cdots\int_{K}W_j({\rm conv}\{x_1,\ldots ,x_N\})\,dx_N\cdots dx_1$$
is minimized when $K$ is the Euclidean ball $D_n$ of volume $1$. In other words, for every convex body $K$ in ${\mathbb R}^n$
we have
\begin{equation}\label{eq:hartzoulaki}A_{n,N,j}|K|^{N+\frac{n-j}{n}}\ls\int_{K}\cdots\int_{K}W_j({\rm conv}\{x_1,\ldots ,x_N\})\,dx_N\cdots dx_1\end{equation}
for some constant $A_{n,N,j}>0$, with equality if and onlly if $K=D_n$ (see also \cite{Saroglou-2010} for the characterization
of the cases of equality). This inequality generalizes to the setting of quermassintegrals
a well-known result of Groemer \cite{Groemer-1974} (see also \cite{Schopf-1977} and \cite{Giannopoulos-Tsolomitis-2003})
which concerned the expected value of the volume of ${\rm conv}\{x_1,\ldots ,x_N\}$. All these results are proved via
Steiner symmetrization, and hence, the left hand-side of \eqref{eq:hartzoulaki} involves only the volume of $K$ and
is not sensitive to the value of $W_j(K)$. Since
$$\omega_n^\frac{j}{n}|K|^\frac{n-j}{n}\ls W_j(K)$$
by the Aleksandrov inequalities, Theorem~\ref{th:intro-4} provides a stronger estimate if we ignore the values
of the constants.

\section{Mixed volumes and Quermassintegrals}\label{section-2}

We work in ${\mathbb R}^n$, which is equipped with the standard inner product $\langle\cdot ,\cdot\rangle $. We denote by $\|\cdot \|_2$
the Euclidean norm, and write $B_2^n$ for the Euclidean unit ball and $S^{n-1}$ for the unit sphere.
Volume is denoted by $|\cdot |$. We write $\omega_n$ for the volume of $B_2^n$ and $\sigma $ for the rotationally invariant probability
measure on $S^{n-1}$. The Grassmann manifold $G_{n,k}$ of all $k$-dimensional subspaces of ${\mathbb R}^n$ is equipped with the Haar probability
measure $\nu_{n,k}$. For every $1\ls k\ls n-1$ and $F\in G_{n,k}$ we write $P_F$ for the orthogonal projection from $\mathbb R^{n}$ onto $F$.

\smallskip

A convex body in ${\mathbb R}^n$ is a compact convex subset $K$ of ${\mathbb R}^n$ with non-empty interior. We say that $K$ is
centrally symmetric if $x\in K$ implies that $-x\in K$, and that $K$ is centered if its barycenter $\frac{1}{|K|}\int_Kx\,dx $ is at the origin.
The support function of a convex body $K$ is defined by $h_K(y)=\max \{\langle x,y\rangle :x\in K\}$, and the mean width of $K$ is
\begin{equation*}w(K)=\int_{S^{n-1}}h_K(\xi )\,d\sigma (\xi ). \end{equation*}
The circumradius of $K$ is the quantity $R(K)=\max\{ \|x\|_2:x\in K\}$ i.e. the smallest $R>0$ for which $K\subseteq RB_2^n$.
We write $r(K)$ for the inradius of $K$, the largest $r>0$
for which there exists $x_0\in K$ such that $x_0+rB_2^n\subseteq K$. If $0\in {\rm int}(K)$ then we define the polar body $K^{\circ }$ of $K$ by
\begin{equation*}K^{\circ }:=\{ y\in {\mathbb R}^n: \langle x,y\rangle \ls 1 \;\hbox{for all}\; x\in K\}. \end{equation*}
The cone probability measure $\mu_K$ on the boundary $\partial (K)$ of a convex body $K$ with $0\in {\rm int}(K)$ is defined by
$$\mu_K(B)=\frac{|\{rx:x\in B,0\ls r\ls 1\}|}{|K|}$$
for all Borel subsets $B$ of $\partial (K)$. We shall use the identity
\begin{equation}\label{eq:cone-measure}\int_{{\mathbb R}^n}f(x)\,dx=n\,|K|\int_0^{\infty }r^{n-1}\int_{\partial (K)}f(rx)\,d\mu_K(x)\,dr\end{equation}
which holds for every integrable function $f:{\mathbb R}^n\to {\mathbb R}$ (see \cite[Proposition~1]{Naor-Romik}).

\smallskip

Minkowski's theorem, and definition of the mixed volumes, states that if $K_1,\ldots ,K_m$ are non-empty, compact convex
subsets of ${\mathbb R}^n$, then the volume of $\lambda_1K_1+\cdots +\lambda_mK_m$ is a homogeneous polynomial of degree $n$ in
$\lambda_i>0$. One can write
\begin{equation*}|\lambda_1K_1+\cdots +\lambda_mK_m|=\sum_{1\ls i_1,\ldots ,i_n\ls m}
V(K_{i_1},\ldots ,K_{i_n})\lambda_{i_1}\cdots \lambda_{i_n},\end{equation*}
where the coefficients $V(K_{i_1},\ldots ,K_{i_n})$ are invariant under permutations of their arguments. The coefficient $V(K_{i_1},\ldots ,K_{i_n})$
is the mixed volume of $K_{i_1},\ldots ,K_{i_n}$. In particular, if $K$ and $D$ are two convex bodies in ${\mathbb R}^n$
then the function $|K+\lambda D|$ is a polynomial in $\lambda\in [0,\infty )$:
\begin{equation*}|K+\lambda D|=\sum_{j=0}^n \binom{n}{j} V_{n-j}(K,D)\;\lambda^j,\end{equation*}
where $V_{n-j}(K,D)= V((K,n-j),(D,j))$ is the $j$-th mixed volume of $K$ and $D$ (we use  the notation $(D,j)$ for $D,\ldots ,D$ $j$-times).
If $D=B_2^n$ then we set $W_j(K):=V_{n-j}(K,B_2^n)=V((K, n-j), (B_2^n, j))$; this is the $j$-th quermassintegral of $K$.
The intrinsic volumes $V_j(K)$ of $K$ are defined for $0\ls j\ls n$ by
\begin{equation}\label{eq:intrinsic}V_j(K)=\frac{\binom{n}{j}}{\omega_{n-j}}W_{n-j}(K).\end{equation}
Thus, Steiner's formula can be written in the following two equivalent ways:
$$|K+\lambda B_2^n|=\sum_{j=0}^n\binom{n}{j}W_j(K)\lambda^j=\sum_{j=0}^n\omega_{n-j}V_j(K)\lambda^{n-j}.$$
The mixed volume $V_{n-1}(K,D)$ can be expressed as
\begin{equation}\label{eq:not-1}V_{n-1}(K,D)={\frac{1}{n}}\int_{S^{n-1}}h_D(\theta )d\sigma_K(\theta ),\end{equation}
where $\sigma_K$ is the surface area measure of $K$; this is the Borel measure
on $S^{n-1}$ defined by
\begin{equation*}\sigma_K(A)=m (\{x\in {\rm bd}(K):\;{\rm the}\;{\rm outer}
\;{\rm normal}\;{\rm to}\;K\;{\rm at}\;x\;{\rm belongs}\;
{\rm to}\;A\}),\end{equation*}
where $m$ is the Hausdorff measure on ${\rm bd}(K)$. In particular, the surface area $S(K):=\sigma_K(S^{n-1})$ of $K$ satisfies
\begin{equation*}S(K)=nW_1(K).\end{equation*}
Kubota's integral formula expresses the quermassintegral $W_j(K)$ as an average of the volumes of
$(n-j)$-dimensional projections of $K$:
\begin{equation}\label{eq:kubota}W_j(K)=\frac{\omega_n}{\omega_{n-j}}\int_{G_{n,n-j}}
|P_F(K)|d\nu_{n,n-j}(F).\end{equation}Applying this formula for
$j=n-1$ we see that \begin{equation*}W_{n-1}(K)=\omega_n
w(K).\end{equation*}
Aleksandrov's inequalities (see \cite{Burago-Zalgaller-book} and \cite{Schneider-book}) imply that if we set
\begin{equation}\label{eq:aleksandrov-1}Q_j(K)=\left
(\frac{W_{n-j}(K)}{\omega_n}\right )^{\frac{1}{j}}=\left
(\frac{1}{\omega_j}\int_{G_{n,j}}|P_F(K)|\,d\nu_{n,j}(F)\right )^{\frac{1}{j}},\end{equation}
then $j\mapsto Q_j(K)$ is decreasing. In particular, for every $1\ls j\ls n-1$ we have
\begin{equation}\label{eq:aleksandrov-2}{\rm vrad}(K):=\left (\frac{|K|}{\omega_n}\right )^{\frac{1}{n}}\ls \left (\frac{1}{\omega_j}\int_{G_{n,j}}|P_F(K)|\,d\nu_{n,j}(F)\right )^{\frac{1}{j}}\ls w(K).\end{equation}
Another consequence of Aleksandrov's inequalities is the following Brunn-Minkowski inequality
for the quermassintegrals: one has
\begin{equation}\label{eq:Wj-bm}W_j(K+D)^{\frac{1}{n-j}}\gr W_j(K)^{\frac{1}{n-j}}+W_j(D)^{\frac{1}{n-j}}\end{equation}
for all $j=0,\ldots ,n-1$ and any pair of convex bodies $K$ and $D$ in ${\mathbb R}^n$.

\smallskip

We refer to the books \cite{Gardner-book} and \cite{Schneider-book} for basic facts from the Brunn-Minkowski theory and to the books
\cite{AGA-book} and \cite{BGVV-book} for basic facts from asymptotic convex geometry.

\section{Proof of Theorem~\ref{th:intro-1} and Theorem~\ref{th:intro-2}}\label{section-3}

In this section we prove Theorem~\ref{th:intro-1} and Theorem~\ref{th:intro-2}.
Consider the set $A_{n,k}$ of $k$-dimensional affine subspaces of ${\mathbb R}^n$
equipped with the Haar probability measure $\mu_{n,k}$. Our starting point is Crofton's formula
(see \cite[Theorem~5.1.1]{Schneider-Weil-book}).

\begin{theorem}\label{th:sw}Let $K$ be a convex body in ${\mathbb R}^n$. For any $1\ls j\ls k\ls n-1$ we have that
$$\int_{A_{n,k}}V_j(K\cap F)\,d\mu_{n,k}(F)=\frac{\binom{k}{j}\omega_k\omega_{n-k+j}}{\binom{n}{k-j}\omega_j\omega_n}V_{n-k+j}(K).$$
\end{theorem}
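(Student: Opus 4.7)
My plan is to identify the map
$\Phi_{k,j}(K):=\int_{A_{n,k}}V_j(K\cap F)\,d\mu_{n,k}(F)$
as a continuous, rigid-motion invariant valuation on convex bodies in ${\mathbb R}^n$ that is homogeneous of degree $n-k+j$, invoke Hadwiger's characterization theorem to deduce that $\Phi_{k,j}(K)=c_{n,k,j}V_{n-k+j}(K)$ for some constant $c_{n,k,j}$, and finally determine $c_{n,k,j}$ by evaluating both sides at $K=B_2^n$.

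For the structural hypotheses: the integrand $K\mapsto V_j(K\cap F)$ is a valuation (with $V_j(\emptyset)=0$) at each fixed $F$, and this property is preserved under integration against $\mu_{n,k}$; continuity in the Hausdorff metric is standard via dominated convergence once one bounds the integrand uniformly by enclosing $K$ in a large Euclidean ball; and translation and rotation invariance follow from the rigid-motion invariance of $\mu_{n,k}$ by the change of variable $F\mapsto gF$. To obtain the degree of homogeneity I parametrize $F=F_0+x$ with $F_0\in G_{n,k}$ and $x\in F_0^{\perp}$, use the identity $(\lambda K)\cap(F_0+x)=\lambda(K\cap(F_0+x/\lambda))$ together with $V_j(\lambda\,\cdot\,)=\lambda^{j}V_j(\cdot)$, and absorb the Jacobian $\lambda^{n-k}$ from the substitution $x\mapsto\lambda x$ on $F_0^{\perp}$ to conclude $\Phi_{k,j}(\lambda K)=\lambda^{n-k+j}\Phi_{k,j}(K)$. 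Hadwiger's theorem then forces $\Phi_{k,j}(K)=c_{n,k,j}V_{n-k+j}(K)$, since $V_{n-k+j}$ is, up to scalar, the unique continuous rigid-motion invariant valuation that is homogeneous of that degree.

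To identify the constant I specialize to $K=B_2^n$. With the above decomposition, $B_2^n\cap(F_0+x)$ is a $k$-dimensional Euclidean ball of radius $\sqrt{1-\|x\|_2^2}$ whenever $\|x\|_2\ls 1$, so $V_j(B_2^n\cap F)=\binom{k}{j}\frac{\omega_k}{\omega_{k-j}}(1-\|x\|_2^2)^{j/2}$. Passing to polar coordinates on $F_0^{\perp}$ reduces the integration to a one-dimensional beta integral which, after invoking $\omega_m=\pi^{m/2}/\Gamma(m/2+1)$ and $z\Gamma(z)=\Gamma(z+1)$, collapses to $\omega_{n-k+j}/\omega_j$. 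Comparing with $V_{n-k+j}(B_2^n)=\binom{n}{k-j}\omega_n/\omega_{k-j}$, together with $\binom{n}{n-k+j}=\binom{n}{k-j}$, yields the stated constant.

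The main obstacle is the bookkeeping in the calibration: after Hadwiger one is reduced to matching the Gamma-function identity $\pi^{(n-k)/2}\Gamma(j/2+1)/\Gamma((n-k+j)/2+1)=\omega_{n-k+j}/\omega_j$ against the exact form in the theorem. A subtle point to watch is the normalization of $\mu_{n,k}$, which is really a locally finite rigid-motion invariant measure on the noncompact affine Grassmannian; however, since both the homogeneity argument and the calibration use the same normalization, the final constant is consistent with the stated formula.
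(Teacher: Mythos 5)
This is Crofton's intersection formula, which the paper does not prove but simply cites from Schneider--Weil (Theorem~5.1.1). Your proof via Hadwiger's characterization theorem is a genuine, complete, and correct alternative derivation, closer in spirit to the Klain--Rota treatment of integral geometry than to Schneider--Weil's, which obtains Crofton's formula from kinematic-type identities and the Blaschke--Petkantschin decomposition rather than from the classification of invariant valuations. Your structural verification (valuation property preserved under intersection and integration, continuity via dominated convergence after removing the $\mu_{n,k}$-null set of tangent planes, rigid-motion invariance, degree of homogeneity $n-k+j$ via the decomposition $F=F_0+x$ with the Jacobian $\lambda^{n-k}$) and your calibration at $K=B_2^n$ both check out: the inner integral $\int_{B_2^{n-k}}(1-\|x\|^2)^{j/2}\,dx$ indeed reduces to $\pi^{(n-k)/2}\Gamma(j/2+1)/\Gamma((n-k+j)/2+1)=\omega_{n-k+j}/\omega_j$, and dividing by $V_{n-k+j}(B_2^n)=\binom{n}{k-j}\omega_n/\omega_{k-j}$ produces exactly the stated constant. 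What Hadwiger buys you here is a complete bypass of the kinematic machinery, at the cost of invoking a deep classification theorem; what Schneider--Weil's route buys is a self-contained integral-geometric derivation.

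One remark worth sharpening: you correctly flag that $\mu_{n,k}$ is only a locally finite invariant measure on the noncompact affine Grassmannian (the paper's phrase ``Haar probability measure'' is a misnomer), but your sentence ``since both the homogeneity argument and the calibration use the same normalization'' slightly misattributes the dependence. The homogeneity degree $n-k+j$ is insensitive to rescaling $\mu_{n,k}$; what pins down the constant $c_{n,k,j}$ is solely that your calibration integral uses the normalization fixed by the product decomposition $\mu_{n,k}=\nu_{n,k}\otimes\lambda_{F_0^\perp}$, which is the convention made explicit in the paper's identity \eqref{eq:04} and in Schneider--Weil. Since that is the decomposition you actually used, the conclusion stands.
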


Using \eqref{eq:intrinsic} for $K$ and $K\cap F$, $F\in G_{n,k}$, we rewrite the assertion of Theorem~\ref{th:sw} as follows:
$$\int_{A_{n,k}}W_{k-j}(K\cap F)\,d\mu_{n,k}(F)=\frac{\omega_k\omega_{n-k+j}}{\omega_j\omega_n}W_{k-j}(K).$$
Equivalently, for all $0\ls j\ls n-k-1\ls n-1$,
$$\int_{A_{n,n-k}}W_j(K\cap F)\,d\mu_{n,n-k}(F)=\frac{\omega_{n-k}\omega_{n-j}}{\omega_{n-k-j}\omega_n}W_j(K).$$
Note that (see e.g. \cite[Section~5.1]{Schneider-Weil-book})
\begin{align}\label{eq:04}\int_{A_{n,n-k}}W_j(K\cap F)\,d\mu_{n,n-k}(F)&=\int_{G_{n,n-k}}\int_{F^{\perp }}W_j(K\cap (x+F))\,dx\,d\nu_{n,n-k}(F)\\
\nonumber &=\int_{G_{n,k}}\int_{F}W_j(K\cap (x+F^{\perp }))\,dx\,d\nu_{n,k}(F).
\end{align}
We shall use the next set of inequalities of Rogers--Shephard type.

\begin{lemma}\label{lem:rs}Let $K$ be a centered convex body in ${\mathbb R}^n$. For every
$F\in G_{n,k}$ and $0\ls j\ls n-k-1\ls n-1$ we have that
\begin{align*}
\binom{n-j}{k}^{-1}|P_F(K)|\,W_j(K\cap F^{\perp }) &\ls \int_FW_j(K\cap (x+F^{\perp }))\,dx \\
&\ls \left(\frac{n+1}{n-k-j+1}\right)^{n-k-j}|P_F(K)|W_j(K\cap F^{\perp }).
\end{align*}
\end{lemma}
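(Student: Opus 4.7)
Introduce $g(x):=W_j(K\cap(x+F^\perp))^{1/m}$ with $m:=n-k-j$. By the Brunn--Minkowski inequality for quermassintegrals \eqref{eq:Wj-bm} applied inside the $(n-k)$-dimensional ambient space $F^\perp$, the function $g$ is non-negative and concave on $P_F(K)\subset F$, and both inequalities of the lemma reduce to comparing $\int_{P_F(K)}g(x)^m\,dx$ with $|P_F(K)|\,g(0)^m$.

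For the lower bound I plan a Rogers--Shephard style construction. Consider the auxiliary convex body
$$H:=\left\{(x,y)\in F\times\mathbb{R}^m : x\in P_F(K),\ \|y\|_2\ls g(x)\right\},$$
which, by concavity of $g$ and convexity of $P_F(K)$, is a convex body in $\mathbb{R}^{k+m}=\mathbb{R}^{n-j}$ containing the origin. Direct computation gives $|H|=\omega_m\int_{P_F(K)}g^m$, $|P_F(H)|=|P_F(K)|$ and $|H\cap F^\perp|=\omega_m g(0)^m$. Applying the classical Rogers--Shephard inequality to $H$ inside $\mathbb{R}^{n-j}$ with the subspace $F$ yields $|P_F(H)|\,|H\cap F^\perp|\ls\binom{n-j}{k}|H|$, which rearranges to the left-hand inequality of the lemma.

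For the upper bound I plan to use a sharp Fradelizi--Meyer type inequality for centered bodies, applied to $g$. In the case $j=0$ the density $g^m(x)=|K\cap(x+F^\perp)|$ has barycenter at $0$ on $P_F(K)$ (this is exactly the statement that $K$ has its barycenter at the origin), so the Fradelizi inequality for $1/m$-concave densities on $\mathbb{R}^k$ with centered barycenter gives $\|g\|_\infty^m\ls\left(\frac{n+1}{m+1}\right)^m g(0)^m$, and the desired bound on $\int g^m$ follows at once from the trivial estimate $g(x)^m\ls\|g\|_\infty^m$ integrated over $P_F(K)$. For general $j\gr 0$ I plan to lift the problem into the ambient space by considering $G(z):=g(P_F(z))$ on $K$, which is a non-negative concave function on the centered convex body $K\subset\mathbb{R}^n$, and to combine Fradelizi's pointwise bound for concave functions on centered bodies with the $1/m$-concave structure of $g^m$ so as to reproduce the same constant $\left(\frac{n+1}{m+1}\right)^m$ in the general case.

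The main obstacle is precisely this upper bound when $j>0$: centeredness of $K$ in $\mathbb{R}^n$ gives a barycenter condition on the density $|K\cap(x+F^\perp)|\,dx$ but \emph{not} on $W_j(K\cap(x+F^\perp))\,dx$, so a Fradelizi-type argument cannot be applied directly inside $F$. The reason the lift to the ambient space is expected to work is that the desired constant $\left(\frac{n+1}{m+1}\right)^m$ carries the $(n+1)$ of the original ambient dimension, and the delicate part will be squeezing the extra denominator $(m+1)^m$ out of Fradelizi's estimate by using the $1/m$-concavity of the slicing density $g^m$ rather than the mere concavity of the lift $G$.
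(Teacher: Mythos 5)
Your lower-bound argument is correct and is, at bottom, the paper's own Rogers--Shephard argument repackaged: the paper proves the pointwise containment $K\cap(x+F^\perp)\supseteq(1-t)(K\cap F^\perp)+x+tz$ with $t=\|x\|_{P_F(K)}$, deduces $W_j(K\cap(x+F^\perp))\gr(1-t)^{n-k-j}W_j(K\cap F^\perp)$, and integrates this profile over $P_F(K)$ to produce the factor $\binom{n-j}{k}^{-1}$; you wrap exactly this content into the auxiliary body $H\subset\mathbb{R}^{n-j}$ and invoke Rogers--Shephard as a black box, landing on the same constant. One caution worth making explicit: $H$ is neither centrally symmetric nor centered, so the version of the Rogers--Shephard projection--section inequality you should quote is the one valid for an arbitrary convex body, namely $|P_F(H)|\cdot\max_{y\in F}|H\cap(y+F^\perp)|\ls\binom{n-j}{k}|H|$; since $|H\cap F^\perp|\ls\max_{y}|H\cap(y+F^\perp)|$ this still gives your inequality, but the version of Spingarn \eqref{eq:spingarn} used elsewhere in the paper carries a centering hypothesis and is not the right thing to cite here.

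The upper bound, on the other hand, has a genuine gap, which you yourself flag. Concavity of the lift $G(z)=g(P_F(z))$ on the centered body $K$ is not enough: a generic nonnegative concave function on a centered body in $\mathbb{R}^n$ satisfies a bound with constant $n+1$ (not $\frac{n+1}{m+1}$ raised to a power), and the favourable $(m+1)^m$ in the denominator comes from the $1/m$-concave structure of $g^m$, which is precisely the structure your lift discards by passing to $G$. Fradelizi's inequality \eqref{eq:fradelizi-1} does handle the case $j=0$, because there the slicing density $x\mapsto|K\cap(x+F^\perp)|$ has barycenter $0$ on $F$; for $j>0$ there is no such barycenter condition on $x\mapsto W_j(K\cap(x+F^\perp))$, and no elementary manipulation of Fradelizi's bound for concave functions will manufacture it. The missing ingredient is the Gr\"unbaum-type inequality of Stephen and Yaskin \cite{Stephen-Yaskin-2019}, which states exactly that for a centered convex body $K$,
$$\max_{x\in F}W_j(K\cap(x+F^\perp))\ls\left(\frac{n+1}{n-k-j+1}\right)^{n-k-j}W_j(K\cap F^\perp),$$
after which the upper bound of the lemma follows immediately from the trivial estimate $\int_F W_j(K\cap(x+F^\perp))\,dx\ls|P_F(K)|\max_{x\in F}W_j(K\cap(x+F^\perp))$. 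Your sketch in effect proposes to reprove the Stephen--Yaskin theorem in passing; you should instead cite it directly, as the paper does.
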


\begin{proof}We follow the proof of the Rogers-Shephard inequality (see \cite{Rogers-Shephard-1958}).
For the left hand-side inequality, let $x\in P_F(K)$ and $t=\|x\|_{P_F(K)}\in [0,1]$,
where $\|\cdot\|_{P_F(K)}$ is the Minkowski functional of $P_F(K)$. Then, there exists $z\in F^{\perp}$ such that $x+tz\in tK$, therefore
$$K\cap (x+F^{\perp })\supseteq (1-t)(K\cap F^{\perp })+x+tz.$$
Then,
\begin{align*}
W_j(K\cap (x+F^{\perp })) &= V(K\cap (x+F^{\perp });n-k-j,B_2^{n-k};j)\gr V((1-t)(K\cap F^{\perp });n-k-j,B_2^{n-k};j) \\
&= (1-t)^{n-k-j}V(K\cap F^{\perp };n-k-j,B_2^{n-k};j)=(1-t)^{n-k-j}W_j(K\cap F^{\perp }).
\end{align*}
It follows that
$$W_j(K\cap (x+F^{\perp }))\gr (1-\|x\|_{P_F(K)})^{n-k-j}W_j(K\cap F^{\perp }).$$
Integrating with respect to $x\in F$ we obtain
\begin{align*}\int_FW_j(K\cap (x+F^{\perp }))\,dx &= \int_{P_F(K)}W_j(K\cap (x+F^{\perp }))\,dx\\
&\gr W_j(K\cap F^{\perp })\int_F(1-\|x\|_{P_F(K)})^{n-k-j}\mathds{1}_{P_F(K)}(x)\,dx,
\end{align*}
because the support of $x\mapsto W_j(K\cap (x+F^{\perp }))$ is the projection $P_F(K)$ of $K$ onto $F$.
Now, changing to polar coordinates with respect to the cone measure $\mu_{\partial P_F(K)}$ of $\partial P_F(K)$
and using \eqref{eq:cone-measure}, we write
\begin{align*}
\int_F(1-\|x\|_{P_F(K)})^{n-k-j}\mathds{1}_{P_F(K)}(x)\,dx &= k|P_F(K)|\int_{\partial P_F(K)}\int_0^1t^{k-1}(1-\|t\theta \|_{P_F(K)})^{n-k-1}dt\,d\mu_{\partial P_F(K)}(\theta )\\
&= k|P_F(K)|\int_0^1t^{k-1}(1-t)^{n-k-j}dt=\binom{n-j}{k}^{-1}|P_F(K)|.
\end{align*}
This proves the left hand-side inequality of the lemma. For the right hand-side inequality we write
$$\int_FW_j(K\cap (x+F^{\perp }))\,dx =\int_{P_F(K)}W_j(K\cap (x+F^{\perp }))\,dx
\ls |P_F(K)|\,\max_{x\in F}W_j(K\cap (x+F^{\perp })).$$
Finally we use an inequality of Stephen and Yaskin (see \cite{Stephen-Yaskin-2019}) for the quermassintegrals of centered convex bodies, namely,
$$\max_{x\in F}W_j(K\cap (x+F^{\perp }))\ls \left(\frac{n+1}{n-k-j+1}\right)^{n-k-j}W_j(K\cap F^{\perp }),$$
which extends Fradelizi's inequality
\begin{equation}\label{eq:fradelizi-1}\max_{x\in F}|K\cap (x+F^{\perp })|\ls \left(\frac{n+1}{n-k+1}\right)^{n-k}|K\cap F^{\perp }|\end{equation}
from \cite{Fradelizi-1997} that had settled the volume case.\end{proof}

From Lemma~\ref{lem:rs} we can easily deduce Theorem~\ref{th:intro-1}.

\begin{proof}[Proof of Theorem~$\ref{th:intro-1}$]Let $K$ be a centered convex body in ${\mathbb R}^n$. For every $0\ls j\ls n-k-1\ls n-1$
We integrate the inequalities of Lemma~\ref{lem:rs} with respect to $\nu_{n,k}$ and recalling \eqref{eq:04}
we obtain
\begin{equation}\label{eq:first-1}\alpha_{n,k,j}\left(\frac{n-k-j+1}{n+1}\right)^{n-k-j}W_j(K)
\ls \int_{G_{n,k}}|P_F(K)|W_j(K\cap F^{\perp})d\nu_{n,k}(F)\ls \alpha_{n,k,j}\binom{n-j}{k}W_j(K),\end{equation}
where $\alpha_{n,k,j}=\frac{\omega_{n-k}\omega_{n-j}}{\omega_{n-k-j}\omega_n}$. Since
$$\int_{G_{n,k}}|P_F(K)|W_j(K\cap F^{\perp})d\nu_{n,k}(F)=\int_{G_{n,n-k}}|P_{F^{\perp}}(K)|W_j(K\cap F)d\nu_{n,n-k}(F),$$
this is equivalent to the assertion of the theorem.\end{proof}

\smallskip

We pass now to the proof of Theorem~\ref{th:intro-2}. First we prove a version of the result for centered convex bodies.

\begin{theorem}\label{th:next}Let $K$ be a centered convex body in ${\mathbb R}^n$. For any $0\ls j\ls n-k-1\ls n-1$
we have
$$\beta_{n,k,j}\frac{W_j(K)}{|K|}\ls\int_{G_{n,k}}\frac{W_j(K\cap F^{\perp })}{|K\cap F^{\perp }|}d\nu_{n,k}(F)
\ls \gamma_{n,k,j}\frac{W_j(K)}{|K|},$$
where $\beta_{n,k,j}=\alpha_{n,k,j}\left(\frac{n-k-j+1}{n+1}\right)^{n-k-j}\binom{n}{k}^{-1}$ and
$\gamma_{n,k,j}=\alpha_{n,k,j}\binom{n-j}{k}\left(\frac{n+1}{n-k+1}\right)^{n-k}$.
\end{theorem}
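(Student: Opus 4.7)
The plan is to deduce Theorem~\ref{th:next} from the two-sided estimate \eqref{eq:first-1} already obtained in the course of proving Theorem~\ref{th:intro-1}. The idea is to bracket $|K\cap F^{\perp }|$ from both sides by constant multiples of $|K|/|P_F(K)|$; then $W_j(K\cap F^{\perp })/|K\cap F^{\perp }|$ becomes comparable, up to explicit dimensional constants, to $|P_F(K)|\cdot W_j(K\cap F^{\perp })/|K|$, and the average of the latter over $G_{n,k}$ is precisely what \eqref{eq:first-1} controls.

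For the upper bound, I would first use Fubini to write
$$|K|=\int_{P_F(K)}|K\cap (x+F^{\perp })|\,dx\ls |P_F(K)|\cdot\max_{x\in F}|K\cap (x+F^{\perp })|,$$
so that $\max_{x\in F}|K\cap (x+F^{\perp })|\gr |K|/|P_F(K)|$. Fradelizi's inequality \eqref{eq:fradelizi-1}, which applies because $K$ is centered, then gives
$$|K\cap F^{\perp }|\gr\left(\frac{n-k+1}{n+1}\right)^{n-k}\frac{|K|}{|P_F(K)|}.$$
Inserting this into the integrand and applying the right hand side of \eqref{eq:first-1} produces the claimed upper bound $\gamma_{n,k,j}W_j(K)/|K|$.

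For the lower bound, I would use the Rogers--Shephard section-projection inequality $|P_F(K)|\cdot|K\cap F^{\perp }|\ls\binom{n}{k}|K|$, which holds since $K$ is centered and is in fact just the $j=0$ case of the left hand side of Lemma~\ref{lem:rs} (the pointwise bound $|K\cap (x+F^{\perp })|\gr (1-\|x\|_{P_F(K)})^{n-k}|K\cap F^{\perp }|$ integrated over $P_F(K)$ yields $|K|\gr\binom{n}{k}^{-1}|P_F(K)|\cdot|K\cap F^{\perp }|$). Rearranging to $1/|K\cap F^{\perp }|\gr |P_F(K)|/(\binom{n}{k}|K|)$ and invoking the left hand side of \eqref{eq:first-1} then produces the claimed lower bound $\beta_{n,k,j}W_j(K)/|K|$.

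I do not foresee a serious obstacle: the analytic content has already been packaged into \eqref{eq:first-1}. The only point requiring care is the matching of the appropriate classical inequality to each direction, namely Fradelizi to lower bound $|K\cap F^{\perp }|$ by $|K|/|P_F(K)|$ for the upper estimate, and Rogers--Shephard to upper bound $|K\cap F^{\perp }|$ by the same quantity for the lower estimate; both rely essentially on the centering hypothesis on $K$.
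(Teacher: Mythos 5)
Your proposal is correct and follows essentially the same path as the paper: the lower bound is obtained by combining the left-hand side of \eqref{eq:first-1} with Spingarn's inequality $|P_F(K)|\,|K\cap F^{\perp}|\ls\binom{n}{k}|K|$, and the upper bound by combining the right-hand side of \eqref{eq:first-1} with the Fubini/Fradelizi estimate $|K|\ls\left(\frac{n+1}{n-k+1}\right)^{n-k}|P_F(K)|\,|K\cap F^{\perp}|$. Your additional remark that the Spingarn inequality can be recovered as the $j=0$ case of Lemma~\ref{lem:rs} is accurate (and worth noting that this direction of the Rogers--Shephard argument does not actually use centering, only that $0\in K$), but the paper simply cites Spingarn directly.
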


\begin{proof}From the left hand-side inequality of Theorem~\ref{th:intro-1} and the Rogers-Shephard type inequality
\begin{equation}\label{eq:spingarn}|P_F(K)|\,|K\cap F^{\perp}|\ls\binom{n}{k}|K|\end{equation}
of Spingarn (see \cite{Spingarn-1993}) we obtain the left hand-side inequality of the theorem.
Next, using Fubini's theorem and \eqref{eq:fradelizi-1} we see that
\begin{align}\label{eq:fradelizi-2}|K| &=\int_{P_F(K)}|K\cap (x+F^{\perp})|\,dx \ls |P_F(K)|\,\max_{x\in P_F(K)}|K\cap (x+F^{\perp})|\\
 &\ls \left(\frac{n+1}{n-k+1}\right)^{n-k}|P_F(K)|\,|K\cap F^{\perp}|,\nonumber\end{align}
and combining this estimate with the right hand-side inequality of Theorem~\ref{th:intro-1}
we obtain the right hand side inequality of the theorem.\end{proof}

For the centrally symmetric case we may use some additional observations.

\begin{proof}[Proof of Theorem~$\ref{th:intro-2}$] Assuming that $K$ is centrally symmetric, we have
$$W_j(K\cap (x+F^{\perp }))=W_j(K\cap (-x+F^{\perp }))$$
for every $x\in P_F(K)$. We also have $\frac{1}{2}[K\cap (x+F^{\perp })]+\frac{1}{2}[K\cap (-x+F^{\perp })]\subseteq K\cap F^{\perp }$,
and hence, by the Brunn-Minkowski inequality \eqref{eq:Wj-bm} we get
$$W_j(K\cap (x+F^{\perp }))\ls W_j(K\cap F^{\perp })$$
for every $x\in P_F(K)$. It follows that
$$\int_FW_j(K\cap (x+F^{\perp }))\,dx\ls |P_F(K)|W_j(K\cap F^{\perp }).$$
Integrating over all $F\in G_{n,k}$ and using \eqref{eq:04} we get
$$\alpha_{n,k,j}W_j(K)\ls \int_{G_{n,k}}|P_F(K)|W_j(K\cap F^{\perp })d\nu_{n,k}(F).$$
Applying \eqref{eq:spingarn} we see that
$$\alpha_{n,k,j}\binom{n}{k}^{-1}\frac{W_j(K)}{|K|}\ls\int_{G_{n,k}}\frac{W_j(K\cap F^{\perp })}{|K\cap F^{\perp }|}d\nu_{n,k}(F).$$
Then, using the right hand side inequality of Theorem~\ref{th:intro-1} and the fact that
$$|K|\ls |P_F(K)|\,\max_{x\in P_F(K)}|K\cap (x+F^{\perp})|=|P_F(K)|\,|K\cap F^{\perp }|$$
holds for centrally symmetric convex bodies, we conclude that
$$\int_{G_{n,k}}\frac{W_j(K\cap F^{\perp })}{|K\cap F^{\perp }|}d\nu_{n,k}(F)
\ls \alpha_{n,k,j}\binom{n-j}{k}\frac{W_j(K)}{|K|}.$$
This shows that for any $0\ls j\ls n-k-1\ls n-1$ we have
$$\alpha_{n,k,j}\binom{n}{k}^{-1}\frac{W_j(K)}{|K|}\ls\int_{G_{n,k}}\frac{W_j(K\cap F^{\perp })}{|K\cap F^{\perp }|}d\nu_{n,k}(F)
\ls \alpha_{n,k,j}\binom{n-j}{k}\frac{W_j(K)}{|K|},$$
which is equivalent to the assertion of the theorem.\end{proof}

\section{Proof of Theorem~\ref{th:intro-3}}\label{section-4}

In this section we prove Theorem~\ref{th:intro-3}. We start by introducing a number of tools
that will be needed. Let $f_1,\ldots ,f_q$ be non-negative, bounded, integrable functions on $\mathbb{R}^d$ such that $\|f_j\|_1>0$ for every
$j=1,\ldots ,q$. Given a compact convex set $C\subset\mathbb{R}^q$ and $p\neq0$, we define
\begin{equation*}
{\mathcal F}_{C,p}(f_1,\ldots ,f_q)=\Bigg(\int_{\mathbb{R}^d}\cdots\int_{\mathbb{R}^d}|[x_1,\ldots ,x_q]C|^p\prod_{j=1}^{q}\frac{f_j(x_j)}{\|f_j\|_1}dx_1\cdots dx_q\Bigg)^{1/p},
\end{equation*}
where
\begin{equation*}
[x_1,\ldots ,x_q]C=\bigg\{\sum_{j=1}^{q}c_jx_j:c=(c_j)\in C\bigg\}
\end{equation*}
and $|[x_1,\ldots ,x_q]C|$ is the volume of this set. The next theorem is due to Dann, Paouris and Pivovarov (see \cite[Theorem~4.4]{D-P-P}).

\begin{theorem}\label{th:new-1}
Let $q$ and $d$ be positive integers. Let $f$ be a non-negative, bounded integrable function on $\mathbb{R}^d$ with $\|f\|_1>0$.
Let $C\subset\mathbb{R}^q$ be a compact convex set and $p\gr1$. We set $m=\min(q,d,{\rm dim}(C))$. Then,
\begin{equation*}
\mathcal F_{C,p}(f;q)\gr\Big(\frac{\|f\|_1}{\omega_d\|f\|_\infty}\Big)^\frac{m}{d}\mathcal F_{C,p}(\mathds{1}_{B^d_2};q),
\end{equation*}
where $\mathcal F_{C,p}(f;q)=\mathcal F_{C,p}(f,\ldots ,f)$ {\rm (}$q$ times{\rm )}.
\end{theorem}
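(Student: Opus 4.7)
The plan is to reduce the inequality to a rearrangement-type statement identifying the indicator of a Euclidean ball of matching $L^1$ and $L^\infty$ norms as the minimizer of $\mathcal F_{C,p}(\,\cdot\,;q)$. Since $\mathcal F_{C,p}(f;q)$ depends on $f$ only through the probability density $f/\|f\|_1$, it is invariant under $f\mapsto\lambda f$ for $\lambda>0$, so I would first normalize $\|f\|_\infty=1$; the ratio $\|f\|_1/\|f\|_\infty$ is also preserved by this normalization. Setting $R=\left(\|f\|_1/\omega_d\right)^{1/d}$, the function $g=\mathds{1}_{RB_2^d}$ has the same $L^1$ and $L^\infty$ norms as $f$. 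Using the change of variables $x_j=Ry_j$ and the homogeneity $[Ry_1,\ldots,Ry_q]C=R\,[y_1,\ldots,y_q]C$ — noting that $[y_1,\ldots,y_q]C$ has dimension $m=\min(q,d,\dim C)$ for generic $y_1,\ldots,y_q$ — one computes $\mathcal F_{C,p}(g;q)=R^{m}\,\mathcal F_{C,p}(\mathds{1}_{B_2^d};q)$, which is exactly the claimed right-hand side.

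The theorem thus reduces to proving
$$
\int_{(\mathbb{R}^d)^q}|[x_1,\ldots,x_q]C|^p\prod_{j=1}^q f(x_j)\,dx\ \gr\ \int_{(\mathbb{R}^d)^q}|[x_1,\ldots,x_q]C|^p\prod_{j=1}^q\mathds{1}_{RB_2^d}(x_j)\,dx
$$
for densities $f$ with matching $L^1$ and $L^\infty$ norms. To attack this I would use the layer-cake representation $f(x)=\int_0^1\mathds{1}_{\{f>t\}}(x)\,dt$ and Fubini to rewrite the left-hand side as an average over $t=(t_1,\ldots,t_q)\in[0,1]^q$ of integrals $\int\Phi(x_1,\ldots,x_q)\prod_j\mathds{1}_{\{f>t_j\}}(x_j)\,dx$ with $\Phi=|[x_1,\ldots,x_q]C|^p$. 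A level-set-by-level-set rearrangement step would replace each $\{f>t_j\}$ by the centred ball of equal measure, producing on the right an integral in which the density $f$ is replaced by its symmetric decreasing rearrangement $f^*$. A final radial bathtub comparison between $f^*$ and $\mathds{1}_{RB_2^d}$ — both radial, equimeasurable in $L^1$ and $L^\infty$, with $f^*-\mathds{1}_{RB_2^d}$ nonpositive on $RB_2^d$, nonnegative off it and of zero integral — then closes the chain, provided $\Phi$ is monotone in the radial variables in the appropriate sense.

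The principal obstacle is the rearrangement step. Classical Brascamp-Lieb-Luttinger yields an \emph{upper} bound on integrals of products $\prod f_i(L_i z)$ via symmetric decreasing rearrangement, whereas here I need a \emph{lower} bound for the integral of $\Phi$ against a product of densities. The trick, which underlies the Dann-Paouris-Pivovarov approach, is to expose the hidden product structure of $\Phi$ itself by writing it as a multiple integral of indicators,
$$
|[x_1,\ldots,x_q]C|^p\ =\ \int_{(\mathbb{R}^d)^p}\prod_{i=1}^p\mathds{1}_{[x_1,\ldots,x_q]C}(y_i)\,dy_1\cdots dy_p,
$$
combine everything into a single integral in the joint variables $(x_1,\ldots,x_q,y_1,\ldots,y_p)$, and then apply a Brascamp-Lieb-Luttinger/Christ-type rearrangement inequality in a configuration that exploits the multilinear dependence of the constraint $y_i\in[x_1,\ldots,x_q]C$ on the joint variables. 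Setting up the correct configuration so that the rearrangement yields a lower bound — rather than the usual upper bound — is the decisive technical ingredient of the proof.
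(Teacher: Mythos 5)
The paper does not prove this theorem: it is stated verbatim as a result of Dann, Paouris, and Pivovarov (cited as \cite[Theorem~4.4]{D-P-P}), so there is no proof in the paper to compare against. Judged on its own, your preliminary reductions are correct and carefully done: the normalization $\|f\|_\infty=1$, the identification of the matching ball radius $R=(\|f\|_1/\omega_d)^{1/d}$, and the scaling computation $\mathcal F_{C,p}(\mathds{1}_{RB_2^d};q)=R^m\mathcal F_{C,p}(\mathds{1}_{B_2^d};q)$ using the generic dimension $m=\min(q,d,\dim C)$ of $[y_1,\ldots,y_q]C$ all check out, and the two-stage plan (rearrange $f$ to a radial decreasing profile, then bathtub against $\mathds{1}_{RB_2^d}$) is the correct skeleton.

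However, the proposal stops exactly at the load-bearing step and declares it to be ``the decisive technical ingredient'' without supplying it; that is the entire content of the theorem, so this is a genuine gap rather than an omitted detail. Moreover, the specific route you gesture at is problematic as stated: after expanding $|[x_1,\ldots,x_q]C|^p=\int\prod_{i=1}^p\mathds{1}_{[x_1,\ldots,x_q]C}(y_i)\,dy$, the indicator $\mathds{1}_{[x]C}(y)$ is \emph{not} a function of a fixed finite family of linear forms in the joint variables $(x_1,\ldots,x_q,y_1,\ldots,y_p)$: membership $y\in[x]C$ is an existential statement over $c\in C$ (equivalently an infinite conjunction of half-space constraints whose normals vary with $C$), so the Brascamp--Lieb--Luttinger hypotheses do not hold directly, and BLL would in any case deliver the wrong inequality direction. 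In the Paouris--Pivovarov line of work that Theorem~\ref{th:new-1} comes from, the rearrangement monotonicity $\mathcal F_{C,p}(f;q)\gr\mathcal F_{C,p}(f^\ast;q)$ is obtained by Steiner symmetrization (shadow systems): one fixes a direction $\theta$, conditions on the projections to $\theta^\perp$, and proves a one-dimensional rearrangement lemma whose inequality goes the right way, then iterates over directions to pass to the symmetric decreasing rearrangement, and only then invokes the bathtub comparison. To complete your argument you would need to either import that Steiner-symmetrization machinery explicitly or state and prove the one-dimensional lemma and justify the iteration; the appeal to a ``correct configuration'' for a Christ-type multilinear inequality, as written, does not close the gap.
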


We also introduce some notation. Given a compact convex set $L\subset\mathbb{R}^m\subset\mathbb{R}^n$, we denote by $W^{(n)}_j(L)$
the $j$-th quermassintegral of $L$ in dimension $n$, which is defined by
\begin{equation*}
W^{(n)}_j(L)=V(L;n-j,B^n_2;j).
\end{equation*}
In the case where the $j$-th quermassintegral of $L$ is taken in dimension $m$ our notation will be the usual one. Namely,
\begin{equation*}
W_j(L)=W^{(m)}_j(L)=V(L;m-j,B^m_2;j).
\end{equation*}
The next lemma gives the relation between $W^{(n)}_j(L)$ and $W^{(m)}_j(L)$. It is probably well-known but we include
a proof for completeness.

\begin{lemma}\label{lem:new-2}
Let $K$ be a compact convex set in $\mathbb{R}^n$ and $F\in G_{n,n-k}$. Then,
\begin{equation*}
W^{(n)}_{k+j}(K\cap F)=\frac{\omega_{k+j}\binom{n-k}{j}}{\omega_j\binom{n}{k+j}}W^{(n-k)}_j(K\cap F)
\end{equation*}
for all $j=1,\ldots ,n-k$.
\end{lemma}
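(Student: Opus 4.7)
The plan is to reduce everything to a direct computation of $|(K\cap F)+\lambda B_2^n|_n$ using the fact that $K\cap F$ lives in the $(n-k)$-dimensional subspace $F$, and then match coefficients against the Steiner formula in $\mathbb{R}^n$. After translating and rotating we may assume $F=\mathbb{R}^{n-k}\times\{0\}\subset\mathbb{R}^n$; set $L:=K\cap F$, so $L$ is a compact convex set in $\mathbb{R}^{n-k}$.

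The first step is to slice $L+\lambda B_2^n$ by affine subspaces parallel to $F$. For each $x''\in F^{\perp}$ with $\|x''\|_2\leqslant\lambda$, the slice of $L+\lambda B_2^n$ at height $x''$ is exactly $L+\sqrt{\lambda^2-\|x''\|_2^2}\,B_2^{n-k}$ (because we can split any Euclidean vector of length $\leqslant\lambda$ into its $F$- and $F^{\perp}$-components). Passing to polar coordinates in the $k$-dimensional subspace $F^{\perp}$ and then substituting $r=\lambda\sin\theta$, I obtain
\begin{equation*}
|L+\lambda B_2^n|_n=k\omega_k\int_0^{\lambda}r^{k-1}\bigl|L+\sqrt{\lambda^2-r^2}\,B_2^{n-k}\bigr|_{n-k}\,dr.
\end{equation*}
Expanding the inner volume by the $(n-k)$-dimensional Steiner formula and computing the resulting Beta integrals yields, for each $i\in\{0,1,\ldots ,n-k\}$, a term proportional to $\lambda^{k+i}$ whose coefficient is $\binom{n-k}{i}W_i^{(n-k)}(L)\,\pi^{k/2}\Gamma((i+2)/2)/\Gamma((k+i+2)/2)$ (the factor $k\omega_k\Gamma(k/2)/2$ simplifies to $\pi^{k/2}$).

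The second step is to compare this expansion with the $n$-dimensional Steiner formula $|L+\lambda B_2^n|_n=\sum_{j=0}^{n}\binom{n}{j}W_j^{(n)}(L)\lambda^j$. Matching the coefficient of $\lambda^{k+j}$ gives
\begin{equation*}
\binom{n}{k+j}W_{k+j}^{(n)}(L)=\binom{n-k}{j}W_j^{(n-k)}(L)\,\pi^{k/2}\,\frac{\Gamma((j+2)/2)}{\Gamma((k+j+2)/2)}.
\end{equation*}
Finally, I use the identity $\omega_m=\pi^{m/2}/\Gamma(m/2+1)$ to recognize the Gamma-function ratio as $\omega_{k+j}/\omega_j$, and solve for $W_{k+j}^{(n)}(L)$ to obtain the claimed formula. (As a sanity check, note that on the left side only powers $\lambda^{k+i}$ with $i\leqslant n-k$ appear, which is consistent with the fact that $L$ is $(n-k)$-dimensional and the intrinsic volumes $W_j^{(n)}(L)$ for $j<k$ vanish.)

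The only real obstacle is bookkeeping: keeping the two Steiner expansions aligned, the shift of index from $j$ to $k+j$, and the passage from the Beta integral $\int_0^{\pi/2}\sin^{k-1}\theta\cos^{j+1}\theta\,d\theta$ to the compact ratio $\omega_{k+j}/\omega_j$. No convex-geometric subtlety is needed beyond the slicing picture, which is why the statement is ``probably well-known''.
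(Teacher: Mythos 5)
Your proof is correct and follows essentially the same strategy as the paper's: slice $L+\lambda B_2^n$ by affine subspaces parallel to $F$, recognize each slice as $L$ plus a ball of appropriate radius in $F$, expand via the lower-dimensional Steiner formula, and match coefficients of $\lambda^{k+j}$. The only difference is organizational: the paper performs the $k=1$ slicing once to derive $W_{j+1}^{(n)}(T)=\frac{\omega_{j+1}}{\omega_j}\frac{\binom{n-1}{j}}{\binom{n}{j+1}}W_j^{(n-1)}(T)$ and then iterates by induction on the codimension, whereas you carry out the $k$-dimensional slicing in a single step and evaluate the resulting Beta integral $\int_0^{\pi/2}\sin^{k-1}\theta\cos^{j+1}\theta\,d\theta=\tfrac12\,\Gamma(k/2)\Gamma((j+2)/2)/\Gamma((k+j+2)/2)$ directly. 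Your one-shot version trades the induction for a slightly heavier Gamma-function computation, but the bookkeeping is sound: the simplification $k\omega_k\Gamma(k/2)/2=\pi^{k/2}$ and the identification $\pi^{k/2}\Gamma((j+2)/2)/\Gamma((k+j+2)/2)=\omega_{k+j}/\omega_j$ are both correct, so the matched coefficients give exactly the stated formula.
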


\begin{proof}We can give a proof by induction on $k$. Note that if we consider
a compact convex set $T\subset {\mathbb R}^{n-1}\subset {\mathbb R}^n$ then
 \begin{align*}
\sum_{j=0}^n \binom{n}{j} W_j^{(n)}(T) \lambda^j  &=|T+\lambda B_2^n|=\int_{-\lambda}^{\lambda} |(T+ \lambda B_{2}^{n}) \cap ({\mathbb R}^{n-1} + te_n)|_{n-1} dt\\
&=\int_{-\lambda}^{\lambda} |T + (\lambda^2 - t^2)^{1/2} B_{2}^{n-1}|_{n-1}dt\\
&=\sum_{j=0}^{n-1}\binom{n-1}{j} W_j^{(n-1)}(T)\int_{-\lambda}^{\lambda} (\lambda^2 - t^2)^{j/2}dt\\
&=\sum_{j=0}^{n-1}\binom{n-1}{j} W_j^{(n-1)}(T)\lambda^{j+1} \frac{\omega_{j+1}}{\omega_{j}}.
\end{align*}
Comparing the coefficients of the two polynomials we see that
$$W_{j+1}^{(n)}(T) = \frac{\omega_{j+1}}{\omega_j}\frac{\binom{n-1}{j}}{\binom{n}{j+1}}W_j^{(n-1)}(T)$$
for all $j=1,\ldots ,n-1$. Now, induction shows that if $T\subset {\mathbb R}^{n-1}\subset {\mathbb R}^n$ then
$$W^{(n)}_{k+j}(T)=\frac{\omega_{k+j}\binom{n-k}{j}}{\omega_j\binom{n}{k+j}}W^{(n-k)}_j(T)$$
for all $j=1,\ldots ,n-k$. The lemma follows if we identify $F\in G_{n,n-k}$ with ${\mathbb R}^{n-k}$
and apply the above formula to $T=K\cap F$.\end{proof}

With these tools we are able to prove the next theorem.

\begin{theorem}\label{th:new-3}
Let $K$ be a centrally symmetric convex body in $\mathbb{R}^n$. For all $0\ls j\ls n-k-1\ls n-1$ we have that
\begin{equation*}
c_{n,k,j}|K|^{n-k}W_{k+j}(K)\ls\int_{K}\cdots\int_{K}W^{(n-k)}_j({\rm conv}\{0,x_1,\ldots ,x_{n-k}\})dx_{n-k}\cdots dx_1\ls\delta_{n,k,j}|K|^{n-k}W_{k+j}(K),
\end{equation*}
where $c_{n,k,j}=\frac{\omega_j}{\omega_{k+j}\omega_{n-k-j}}\frac{1}{\binom{n-k}{j}}\mathcal F_{C,1}(\mathds{1}_{B^{n-k-j}_2};n-k)$
with $C={\rm conv}\{0,e_1,\ldots ,e_{n-k}\}$ and
$$\delta_{n,k,j}=\frac{\omega_j}{\omega_{k+j}}\frac{\binom{n}{k+j}}{\binom{n-k}{j}}.$$
\end{theorem}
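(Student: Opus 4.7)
The strategy is to use Lemma~\ref{lem:new-2} to replace the intrinsic quermassintegral $W_j^{(n-k)}(L)$ of the random simplex $L:=\mathrm{conv}\{0,x_1,\ldots,x_{n-k}\}$ by the ambient quermassintegral $W_{k+j}^{(n)}(L)$. Because $L$ lives in an at most $(n-k)$-dimensional linear subspace, that lemma gives
\[
W_j^{(n-k)}(L)=\frac{\omega_j\binom{n}{k+j}}{\omega_{k+j}\binom{n-k}{j}}W_{k+j}^{(n)}(L),
\]
and the prefactor is precisely $\delta_{n,k,j}$. The upper bound then follows at once: because $K$ is centrally symmetric, $0\in K$ and $L\subseteq K$, so monotonicity of $W_{k+j}^{(n)}$ yields $W_{k+j}^{(n)}(L)\ls W_{k+j}(K)$ pointwise, and integration over $K^{n-k}$ produces the right-hand inequality with constant $\delta_{n,k,j}$.

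For the lower bound I would next apply Kubota's formula \eqref{eq:kubota} inside the integrand to rewrite
\[
W_{k+j}^{(n)}(L)=\frac{\omega_n}{\omega_{n-k-j}}\int_{G_{n,n-k-j}}|P_H(L)|\,d\nu_{n,n-k-j}(H).
\]
Since $P_H$ is linear, $P_H(L)=\mathrm{conv}\{0,P_Hx_1,\ldots,P_Hx_{n-k}\}$; after exchanging the two integrations and disintegrating $dx_i|_K$ along the fibres of $P_H$, the inner integral over $K^{n-k}$ becomes
\[
\int_{H^{n-k}}|[y_1,\ldots,y_{n-k}]C|\prod_{i=1}^{n-k}f_H(y_i)\,dy_1\cdots dy_{n-k}=|K|^{n-k}\mathcal F_{C,1}(f_H;n-k),
\]
where $C:=\mathrm{conv}\{0,e_1,\ldots,e_{n-k}\}\subset\mathbb R^{n-k}$ and $f_H(y):=|K\cap(y+H^{\perp})|$.

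Theorem~\ref{th:new-1} now applies with $q=n-k$, $d=n-k-j$ and $p=1$; since $d\ls q$ and $\dim C=n-k\gr d$, one has $m=n-k-j$, so the exponent $m/d$ collapses to $1$ and DPP gives
\[
\mathcal F_{C,1}(f_H;n-k)\gr\frac{\|f_H\|_1}{\omega_{n-k-j}\|f_H\|_\infty}\,\mathcal F_{C,1}(\mathds{1}_{B^{n-k-j}_2};n-k).
\]
By Fubini $\|f_H\|_1=|K|$, and since $K$ is centrally symmetric the function $f_H$ is even, so Brunn's concavity principle forces $\|f_H\|_\infty=f_H(0)=|K\cap H^{\perp}|$. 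To pull out $|P_H(K)|$, which is what Kubota will ultimately ingest, I would apply Spingarn's inequality \eqref{eq:spingarn} in the form $|P_H(K)|\cdot|K\cap H^{\perp}|\ls\binom{n}{k+j}|K|$ to the ratio $|K|/|K\cap H^{\perp}|$. Integrating the resulting pointwise bound over $H\in G_{n,n-k-j}$ and recognising $\int |P_H(K)|\,d\nu_{n,n-k-j}(H)=\frac{\omega_{n-k-j}}{\omega_n}W_{k+j}(K)$ by a final application of Kubota delivers the lower bound, and a short bookkeeping shows that the five factors (from Lemma~\ref{lem:new-2}, the two Kubota identities, DPP, and Spingarn) multiply to exactly $c_{n,k,j}$.

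The main obstacle is the direction of DPP: Theorem~\ref{th:new-1} places $\|f_H\|_\infty$ in the denominator of its lower bound, which a priori pushes the constant the wrong way. The resolution is that central symmetry lets Brunn identify $\|f_H\|_\infty$ with the specific section volume $|K\cap H^{\perp}|$, and Spingarn's inequality then converts that section volume into the projection volume that Kubota requires. Besides this conceptual point, what needs care is identifying the DPP exponent $m$ as $n-k-j$ (not $n-k$) and composing the constants cleanly, but these are routine once the scheme above is in place.
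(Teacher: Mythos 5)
Your argument is correct and reproduces the paper's proof in all essentials: Kubota's formula to express quermassintegrals via projections, disintegration to obtain the $\mathcal F_{C,1}$ functional for the marginal $f_H$, the Dann--Paouris--Pivovarov bound, the Rogers--Shephard/Spingarn inequality to trade $|K\cap H^\perp|$ for $|P_H(K)|$, a second Kubota, and Lemma~\ref{lem:new-2} to pass between intrinsic and ambient quermassintegrals. The only difference is cosmetic: you invoke Lemma~\ref{lem:new-2} first and prove the upper bound by direct monotonicity of $W^{(n)}_{k+j}$ on $L\subseteq K$, whereas the paper projects first and applies Lemma~\ref{lem:new-2} at the very end; your bookkeeping of the constants is the correct one (the paper's penultimate display omits the $\binom{n}{n-k-j}$ factor that you correctly carry, though its final stated constant is right).
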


\begin{proof}
First we prove the left hand side inequality. Let $F\in G_{n,n-k-j}$ and define $f:F\longrightarrow\mathbb{R}^+$ by $$f(x)=\pi_F(\mathds{1}_K)(x)=|K\cap(x+F^\perp)|.$$
Then $\|f\|_1=\int_{F}|f(x)|dx=\int_{F}|K\cap(x+F^\perp)|dx=|K|$ and $\|f\|_{\infty}=|K\cap F^\perp|$.
We set $d=n-k-j$, $q=n-k$, $p=1$, $C={\rm conv}\{0,e_1,\ldots ,e_{n-k}\}$ and $m=\min\{n-k,n-k-j, n-k\}=n-k-j$. From
Theorem~\ref{th:new-1} we have that
\begin{align*}
&\frac{1}{|K|^{n-k}}\int_{K}\cdots\int_{K}|P_F({\rm conv}\{0,x_1,\ldots ,x_{n-k}\})|dx_{n-k}\cdots dx_1\\
&\hspace*{1.5cm}=\int_{\mathbb{R}^n}\cdots\int_{\mathbb{R}^n}|{\rm conv}\{0,P_F(x_1),\ldots ,P_F(x_{n-k})\}|\prod_{j=1}^{n-k}\frac{\mathds{1}_K(x_j)}{|K|}dx_{n-k}\cdots dx_1\\
&\hspace*{1.5cm}=\int_{F}\cdots\int_{F}|{\rm conv}\{0,x_1,\ldots ,x_{n-k}\}|\prod_{j=1}^{n-k}\frac{\pi_F(\mathds{1}_K)(x_j)}{|K|}dx_{n-k}\cdots dx_1\\
&\hspace*{1.5cm}=\int_{F}\cdots\int_{F}|{\rm conv}\{0,x_1,\ldots ,x_{n-k}\}|\prod_{j=1}^{n-k}\frac{f(x_j)}{\|f\|_1}dx_{n-k}\cdots dx_1\\
&\hspace*{1.5cm}=\mathcal F_{C,1}(f;n-k)\gr\Big(\frac{\|f\|_1}{\omega_{n-k-j}\|f\|_\infty}\Big)^\frac{n-k-j}{n-k-j}\mathcal F_{C,1}(\mathds{1}_{B^{n-k-j}_2};n-k)\\
&\hspace*{1.5cm}=\frac{F_{C,1}(\mathds{1}_{B^{n-k-j}_2};n-k)}{\omega_{n-k-j}}\frac{|K|}{|K\cap F^\perp|}\\
&\hspace*{1.5cm}\gr\frac{F_{C,1}(\mathds{1}_{B^{n-k-j}_2};n-k)}{\omega_{n-k-j}\binom{n}{n-k-j}}|P_F(K)|,
\end{align*}
where the last inequality comes from the Rogers-Shephard inequality $|K|\ls |P_F(K)|\,|K\cap F^{\perp}|$. Integrating both sides of this
inequality over $G_{n,n-k-j}$ we get
\begin{equation*}
\frac{F_{C,1}(\mathds{1}_{B^{n-k-j}_2};n-k)}{\omega_{n-k-j}}|K|^{n-k}W_{k+j}(K)\ls\int_{K}\cdots\int_{K}W^{(n)}_{k+j}({\rm conv}\{0,x_1,\ldots ,x_{n-k}\})dx_{n-k}\cdots dx_1.
\end{equation*}
Using the formula of Lemma~\ref{lem:new-2} we obtain the result.

For the right hand side inequality we observe that $P_F({\rm conv}\{0,x_1,\dots,x_{n-k}\})\subseteq P_F(K)$ for every $x_1,\dots,x_{n-k}\in K$, therefore we may write
\begin{align*}
&\int_{K}\cdots\int_{K}|P_F({\rm conv}\{0,x_1,\ldots ,x_{n-k}\})|dx_{n-k}\cdots dx_1\ls \int_{K}\cdots\int_{K}|P_F(K)|dx_{n-k}\cdots dx_1\\
&\hspace*{1.5cm}=|K|^{n-k}|P_F(K)|.
\end{align*}
Integrating both sides of this inequality over $G_{n,n-k-j}$ as before, we get
\begin{equation*}
\int_{K}\cdots\int_{K}W^{(n)}_{k+j}({\rm conv}\{0,x_1,\ldots ,x_{n-k}\})dx_{n-k}\cdots dx_1\ls |K|^{n-k}W_{k+j}(K),
\end{equation*}
 and using the formula of Lemma~\ref{lem:new-2} we obtain the result.
\end{proof}

\smallskip

We shall combine Theorem~\ref{th:new-3} with the generalized Blaschke-Petkantschin formula (see \cite[Theorem~7.2]{Schneider-Weil-book}).

\begin{lemma}\label{lem:new-4}
Let $1\ls s\ls n-1$. There exists a constant $p(n,s)>0$ such that, for any non negative bounded Borel
measurable function $g:(\mathbb{R}^n)^s\rightarrow\mathbb{R}$,
\begin{align*}
&\int_{\mathbb{R}^n}\cdots\int_{\mathbb{R}^n}g(x_1,\ldots ,x_s)dx_s\cdots dx_1\\
&\hspace*{1cm}=p(n,s)\int_{G_{n,s}}\int_{F}\cdots\int_{F}g(x_1,\ldots ,x_s)|{\rm conv}\{0,x_1,\ldots ,x_s\}|^{n-s}dx_s\cdots dx_1\,d\nu_{n,s}(F).
\end{align*}
\end{lemma}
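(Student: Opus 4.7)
The plan is to derive the formula as a change-of-variables identity on $(\mathbb{R}^n)^s$. Away from a Lebesgue-null set, an $s$-tuple $(x_1,\ldots,x_s) \in (\mathbb{R}^n)^s$ is linearly independent and thus determines a unique $s$-dimensional subspace $F = {\rm span}(x_1,\ldots,x_s) \in G_{n,s}$. This allows us to parametrize $(\mathbb{R}^n)^s$, up to a null set, by pairs $(F,(y_1,\ldots,y_s)) \in G_{n,s}\times F^s$, where $y_i = x_i$ is viewed as an element of the moving subspace $F$. The degrees of freedom match correctly: $s(n-s) + s^2 = sn$.

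The key step is computing the Jacobian of this parametrization. I would identify $T_FG_{n,s}$ with ${\rm Hom}(F,F^{\perp })$, so that an infinitesimal motion of $F$ is encoded by a linear map $L:F\to F^{\perp }$, under which each $x_i = y_i$ moves by $Ly_i \in F^{\perp }$. Fixing orthonormal bases of $F$ and $F^{\perp }$, one writes $L$ as an $(n-s)\times s$ matrix and lets $Y$ be the $s\times s$ matrix whose columns are $y_1,\ldots,y_s$ expressed in the chosen basis of $F$. Then, for each of the $n-s$ transverse coordinate directions in $F^{\perp }$, the map from the corresponding row of $L$ to the components of $(Ly_1,\ldots,Ly_s)$ in that direction is left-multiplication by $Y^{T}$ and hence has Jacobian $|\det Y|$. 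Multiplying over all $n-s$ transverse directions produces a factor $|\det Y|^{n-s}$; since $|\det Y| = s!\,|{\rm conv}\{0,y_1,\ldots,y_s\}|$, this is, up to the constant $(s!)^{n-s}$, equal to $|{\rm conv}\{0,x_1,\ldots,x_s\}|^{n-s}$. Absorbing constants into $p(n,s)$ and applying Fubini on $G_{n,s}\times F^s$ yields the identity.

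The main obstacle is the bookkeeping of the Jacobian: one must choose a consistent local trivialization of the frame bundle over a neighborhood of $F$ in $G_{n,s}$, verify that the induced measure on $G_{n,s}$ coincides (up to a dimension-dependent constant) with the invariant probability measure $\nu_{n,s}$, and confirm that the resulting functional of $(y_1,\ldots,y_s)$ is exactly $|\det Y|^{n-s}$ rather than some other rotation invariant. Since this is a classical identity (see \cite[Theorem~7.2.1]{Schneider-Weil-book} for a complete derivation), in the body of the paper I would simply invoke it, using the above sketch only to motivate the appearance of the factor $|{\rm conv}\{0,x_1,\ldots,x_s\}|^{n-s}$ and to explain why the exponent $n-s$ matches the codimension of $F$.
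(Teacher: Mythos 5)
The paper does not prove this lemma; it simply cites it from \cite[Theorem~7.2]{Schneider-Weil-book}, which is exactly what you ultimately propose to do. Your Jacobian sketch correctly identifies the block-diagonal structure that produces the factor $|{\rm conv}\{0,x_1,\ldots ,x_s\}|^{n-s}$ (via $|\det Y|^{n-s}$ with $|\det Y|=s!\,|{\rm conv}\{0,y_1,\ldots ,y_s\}|$), so the motivating discussion is a sound complement to the citation.
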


\begin{theorem}\label{th:new-5}
Let $K$ be a centrally symmetric convex body in $\mathbb{R}^n$. For every $0\ls j\ls n-k-1\ls n-1$ we have that
\begin{equation*}
W_{k+j}(K)\ls c_{n,k,j}^{-1}\max_{F\in G_{n,n-k}}W_j(K\cap F)
\end{equation*}
where $c_{n,k,j}$ is the constant of Theorem~$\ref{th:new-3}$.
\end{theorem}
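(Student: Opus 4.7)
The plan is to apply the left-hand side of Theorem~\ref{th:new-3}, which bounds $W_{k+j}(K)$ from above by $c_{n,k,j}^{-1}|K|^{-(n-k)}$ times the integral
\begin{equation*}
I(K):=\int_K\!\cdots\!\int_K W_j^{(n-k)}(\conv\{0,x_1,\ldots,x_{n-k}\})\,dx_{n-k}\cdots dx_1,
\end{equation*}
and then to show that $I(K)\ls |K|^{n-k}\max_{F\in G_{n,n-k}}W_j(K\cap F)$. Once this is established, dividing through by $|K|^{n-k}$ yields the conclusion of Theorem~\ref{th:new-5}.

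The bound on $I(K)$ is obtained from the generalized Blaschke--Petkantschin formula of Lemma~\ref{lem:new-4} applied with $s=n-k$. Since a.e.\ tuple $(x_1,\ldots,x_{n-k})\in K^{n-k}$ spans an $(n-k)$-dimensional subspace $F$, the formula rewrites $I(K)$ as
\begin{equation*}
p(n,n-k)\int_{G_{n,n-k}}\int_{(K\cap F)^{n-k}}W_j^{(n-k)}(\conv\{0,x_1,\ldots,x_{n-k}\})\,|\conv\{0,x_1,\ldots,x_{n-k}\}|^k\,dx_{n-k}\cdots dx_1\,d\nu_{n,n-k}(F).
\end{equation*}
For each $F\in G_{n,n-k}$ and each $x_1,\ldots,x_{n-k}\in K\cap F$, central symmetry forces $0\in K$, so the simplex $\conv\{0,x_1,\ldots,x_{n-k}\}$ lies inside $K\cap F$ by convexity. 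Monotonicity of the $(n-k)$-dimensional quermassintegrals inside $F$ then yields
\begin{equation*}
W_j^{(n-k)}(\conv\{0,x_1,\ldots,x_{n-k}\})\ls W_j(K\cap F)\ls\max_{F'\in G_{n,n-k}}W_j(K\cap F').
\end{equation*}
Pulling this maximum outside both integrals and applying Lemma~\ref{lem:new-4} in the reverse direction with the constant integrand $\mathds{1}_K(x_1)\cdots\mathds{1}_K(x_{n-k})$ identifies the remaining double integral as exactly $|K|^{n-k}$, which gives $I(K)\ls |K|^{n-k}\max_{F}W_j(K\cap F)$ and hence the theorem.

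The argument is essentially mechanical once the Blaschke--Petkantschin reformulation is in place; the single point that requires care is the interpretation of $W_j^{(n-k)}(\conv\{0,x_1,\ldots,x_{n-k}\})$ for arbitrary $x_i\in\mathbb{R}^n$. It should be read as the $j$-th quermassintegral of the simplex taken inside its affine hull, which is a.s.\ $(n-k)$-dimensional, and this is precisely the setting produced by the Blaschke--Petkantschin disintegration over $G_{n,n-k}$. With that convention the only genuinely convex-geometric ingredient is monotonicity of quermassintegrals inside a single $(n-k)$-plane, and no integral-geometric input beyond Lemma~\ref{lem:new-4} is needed.
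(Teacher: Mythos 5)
Your argument is correct and follows the paper's proof step for step: invoke the left-hand inequality of Theorem~\ref{th:new-3}, disintegrate via the Blaschke--Petkantschin formula of Lemma~\ref{lem:new-4} with $s=n-k$, bound $W_j^{(n-k)}(\conv\{0,x_1,\ldots,x_{n-k}\})$ by $W_j(K\cap F)$ using inclusion and monotonicity, pull out the maximum over $G_{n,n-k}$, and reassemble the residual integral into $|K|^{n-k}$ by applying Lemma~\ref{lem:new-4} in reverse. The only additions --- making explicit that central symmetry gives $0\in K$, and that the $W_j^{(n-k)}$ integrand is meaningful because a.e.\ tuple spans an $(n-k)$-plane --- are helpful clarifications of points the paper leaves implicit, not a different route.
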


\begin{proof}
From the left hand side inequality of Theorem~\ref{th:new-3} and the Blaschke-Petkantschin formula with $s=n-k$ we get
\begin{align*}
&c_{n,k,j}|K|^{n-k}W_{k+j}(K)\ls\int_{K}\cdots\int_{K}W^{(n-k)}_j({\rm conv}\{0,x_1,\ldots ,x_{n-k}\})dx_{n-k}\cdots dx_1\\
&\hspace*{1cm}=\int_{\mathbb{R}^n}\cdots\int_{\mathbb{R}^n}W^{(n-k)}_j({\rm conv}\{0,x_1,\ldots ,x_{n-k}\})\prod_{i=1}^{n-k}\mathds{1}_K(x_i)dx_{n-k}\cdots dx_1\\
&\hspace*{1cm}=p(n,n-k)\int_{G_{n,n-k}}\int_{F}\cdots\int_{F}W^{(n-k)}_j({\rm conv}\{0,x_1,\ldots ,x_{n-k}\})|{\rm conv}\{0,x_1,\ldots ,x_{n-k}\}|^k\prod_{i=1}^{n-k}\mathds{1}_K(x_i)\\
&\hspace*{3cm}dx_{n-k}\cdots dx_1\nu_{n,n-k}(F)\\
&\hspace*{1cm}=p(n,n-k)\int_{G_{n,n-k}}\int_{K\cap F}\cdots\int_{K\cap F}W^{(n-k)}_j({\rm conv}\{0,x_1,\ldots ,x_{n-k}\})|{\rm conv}\{0,x_1,\ldots ,x_{n-k}\}|^k\\
&\hspace*{3cm}dx_{n-k}\cdots dx_1\nu_{n,n-k}(F)\\
&\hspace*{1cm}\ls p(n,n-k)\int_{G_{n,n-k}}W_j(K\cap F)\int_{K\cap F}\cdots\int_{K\cap F}|{\rm conv}\{0,x_1,\ldots ,x_{n-k}\}|^kdx_{n-k}\cdots dx_1\nu_{n,n-k}(F)\\
&\hspace*{1cm}\ls \max_{F \in G_{n,n-k}}W_j(K\cap F)\cdot p(n,n-k)\int_{G_{n,n-k}}\int_{K\cap F}\cdots\int_{K\cap F}|{\rm conv}\{0,x_1,\ldots ,x_{n-k}\}|^k\\
&\hspace*{3cm}dx_{n-k}\cdots dx_1\nu_{n,n-k}(F)\\
&\hspace*{1cm}= |K|^{n-k}\max_{F\in G_{n,n-k}}W_j(K\cap F),
\end{align*}
which gives the claim of the theorem.
\end{proof}

Now, Theorem~\ref{th:intro-3} follows from Theorem~\ref{th:new-5} and Aleksandrov's inequalities.

\begin{proof}[Proof of Theorem~$\ref{th:intro-3}$]
Let $K$ be a centrally symmetric convex body in $\mathbb{R}^n$ and $0\ls j\ls n-k-1\ls n-1$. By Aleksandrov's inequalities we have
\begin{equation}
\omega_n^k W_j(K)^{n-k-j}\ls W_{k+j}(K)^{n-j}.
\end{equation}
Combining this fact with the inequality of Theorem~\ref{th:new-5} we get
\begin{equation*}W_j(K)^{n-k-j}\ls(\omega_n^k c_{n,k,j}^{n-j})^{-1}\max_{F \in G_{n,n-k}}W_j(K\cap F)^{n-j}\end{equation*}
and the proof is complete.\end{proof}

Assuming that $K$ is centered, we may repeat the proof of Theorem~\ref{th:new-3}, the only difference being that
for the function $f(x)=\pi_F(\mathds{1}_K)(x)=|K\cap(x+F^\perp)|$ we have
$$\|f\|_\infty\ls\Big(\frac{n+1}{k+j+1}\Big)^{k+j}|K\cap F^\perp|$$
by Fradelizi's inequality \eqref{eq:fradelizi-2}. Then, we have the following analogues of Theorem~\ref{th:new-5} and Theorem~\ref{th:intro-3}.

\begin{corollary}\label{cor:new-2}
Let $K$ be a centered convex body in $\mathbb{R}^n$. Then, for all $0\ls j\ls n-k-1\ls n-1$ we have that
\begin{equation*}
c'_{n,k,j}|K|^{n-k}W_{k+j}(K)\ls\int_{K}\cdots\int_{K}W^{(n-k)}_j({\rm conv}\{0,x_1,\ldots ,x_{n-k}\})\,dx_{n-k}\cdots dx_1
\ls\delta_{n,k,j}|K|^{n-k}W_{k+j}(K),
\end{equation*}
where $c^{\prime}_{n,k,j}=c_{n,k,j}\left(\frac{n+1}{k+j+1}\right)^{-(k+j)}$ and
$\delta_{n,k,j}$ is the constant in Theorem~$\ref{th:new-3}$.
\end{corollary}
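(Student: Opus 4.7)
The plan is to mirror the proof of Theorem~\ref{th:new-3} line by line and to locate the single step where central symmetry was actually invoked. Reading that proof, symmetry enters exactly once: in the identification $\|f\|_\infty = |K\cap F^\perp|$ for the marginal $f:F\to\mathbb{R}^+$ defined on $F\in G_{n,n-k-j}$ by $f(x)=\pi_F(\mathds{1}_K)(x)=|K\cap(x+F^\perp)|$. For a centrally symmetric body this identity follows because the slice through the origin is the largest (Brunn's concavity applied symmetrically). Every other ingredient of that proof — namely $\|f\|_1=|K|$ by Fubini, the functional comparison Theorem~\ref{th:new-1} with $d=n-k-j$, $q=n-k$, $p=1$ and $C=\mathrm{conv}\{0,e_1,\ldots,e_{n-k}\}$, Spingarn's inequality $|P_F(K)|\,|K\cap F^\perp|\ls\binom{n}{n-k-j}|K|$, the averaging over $G_{n,n-k-j}$, and Lemma~\ref{lem:new-2} passing from $W^{(n)}_{k+j}$ to $W^{(n-k)}_j$ — is insensitive to symmetry and survives verbatim for centered bodies.

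To replace the symmetric step in the centered setting, I apply Fradelizi's inequality \eqref{eq:fradelizi-1} with $F$ of dimension $n-k-j$ so that the slices $K\cap(x+F^\perp)$ have dimension $k+j$. This yields
\begin{equation*}
\|f\|_\infty=\max_{x\in F}|K\cap(x+F^\perp)|\ls\left(\frac{n+1}{k+j+1}\right)^{k+j}|K\cap F^\perp|,
\end{equation*}
which is precisely the bound flagged in the paragraph preceding the corollary. Plugging this into the inequality
\begin{equation*}
\mathcal{F}_{C,1}(f;n-k)\gr\left(\frac{\|f\|_1}{\omega_{n-k-j}\|f\|_\infty}\right)\mathcal{F}_{C,1}(\mathds{1}_{B^{n-k-j}_2};n-k)
\end{equation*}
produces an additional factor of $\left(\frac{n+1}{k+j+1}\right)^{-(k+j)}$ relative to the centrally symmetric computation; after Spingarn's bound, integration over $G_{n,n-k-j}$, and the conversion of Lemma~\ref{lem:new-2}, this factor propagates precisely to the claimed constant $c'_{n,k,j}=c_{n,k,j}\left(\frac{n+1}{k+j+1}\right)^{-(k+j)}$.

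The upper bound is identical to that in Theorem~\ref{th:new-3}: the inclusion $P_F(\mathrm{conv}\{0,x_1,\ldots,x_{n-k}\})\subseteq P_F(K)$ for every $x_1,\ldots,x_{n-k}\in K$ does not use symmetry, so averaging over $G_{n,n-k-j}$ and applying Lemma~\ref{lem:new-2} yields the same constant $\delta_{n,k,j}$. The only content of the proof is therefore the substitution of Fradelizi's inequality for the symmetric estimate, and there is no genuine obstacle; the work is purely bookkeeping of the extra $\left(\frac{n+1}{k+j+1}\right)^{k+j}$ factor.
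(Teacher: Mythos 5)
Your proof correctly identifies that central symmetry enters the proof of Theorem~\ref{th:new-3} only through the identity $\|f\|_\infty = |K\cap F^\perp|$ for the marginal $f(x)=|K\cap(x+F^\perp)|$, and that replacing it with Fradelizi's bound $\|f\|_\infty \ls \left(\frac{n+1}{k+j+1}\right)^{k+j}|K\cap F^\perp|$ in the centered case is the only change needed. This is precisely the paper's argument, and your bookkeeping of the extra factor into $c'_{n,k,j}$ is correct.
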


\begin{corollary}\label{cor:new-3}
Let $K$ be a centered convex body in $\mathbb{R}^n$. Then, for all $0\ls j\leq n-k-1\ls n-1$ we have that
\begin{equation*}
W_{k+j}(K)\ls (c^{\prime}_{n,k,j})^{-1}\max_{E \in G_{n,n-k}}W_j(K\cap E)
\end{equation*}
and
\begin{equation*}
W_j(K)^{n-k-j}\ls (\omega_n^k {c^{\prime}_{n,k,j}}^{n-j})^{-1}\max_{E \in G_{n,n-k}}W_j(K\cap E)^{n-j}
\end{equation*}
where $c^{\prime}_{n,k,j}$ is the constant in Corollary~$\ref{cor:new-2}$.
\end{corollary}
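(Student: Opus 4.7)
The plan is to mirror exactly the proofs of Theorem~\ref{th:new-5} and Theorem~\ref{th:intro-3}, with the only substantive difference being that we start from Corollary~\ref{cor:new-2} in place of Theorem~\ref{th:new-3}. All the heavy lifting for the centered case has already been absorbed into that corollary (where Fradelizi's bound $\|f\|_\infty\ls\bigl(\tfrac{n+1}{k+j+1}\bigr)^{k+j}|K\cap F^\perp|$ replaces the Rogers--Shephard step used in the symmetric case), so what remains is essentially a transcription.

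For the first inequality, I would begin with the left-hand side of Corollary~\ref{cor:new-2},
$$c^{\prime}_{n,k,j}|K|^{n-k}W_{k+j}(K)\ls\int_{K}\!\cdots\!\int_{K}W^{(n-k)}_j(\mathrm{conv}\{0,x_1,\ldots ,x_{n-k}\})\,dx_{n-k}\cdots dx_1,$$
and apply the generalized Blaschke--Petkantschin formula of Lemma~\ref{lem:new-4} with $s=n-k$ to rewrite the integral as
$$p(n,n-k)\int_{G_{n,n-k}}\!\int_{K\cap F}\!\cdots\!\int_{K\cap F}W^{(n-k)}_j(\mathrm{conv}\{0,x_1,\ldots ,x_{n-k}\})\,|\mathrm{conv}\{0,x_1,\ldots ,x_{n-k}\}|^{k}\,dx_{n-k}\cdots dx_1\,d\nu_{n,n-k}(F).$$
Since $\mathrm{conv}\{0,x_1,\ldots ,x_{n-k}\}\subseteq K\cap F$ when all $x_i\in K\cap F$, one bounds $W^{(n-k)}_j(\mathrm{conv}\{0,x_1,\ldots ,x_{n-k}\})\ls W_j(K\cap F)$ by monotonicity of quermassintegrals, pulls this factor out, takes the maximum over $F\in G_{n,n-k}$, and then applies Blaschke--Petkantschin in reverse to the remaining integral to recognize it as $p(n,n-k)^{-1}|K|^{n-k}$. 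Dividing by $|K|^{n-k}$ yields $W_{k+j}(K)\ls (c^{\prime}_{n,k,j})^{-1}\max_{E\in G_{n,n-k}}W_j(K\cap E)$.

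For the second inequality, I would invoke Aleksandrov's inequalities (applied in the form used in the proof of Theorem~\ref{th:intro-3}): the monotonicity of $i\mapsto (W_i(K)/\omega_n)^{1/(n-i)}$ in $i$ gives, for $i_1=j\ls i_2=k+j$,
$$\omega_n^k\,W_j(K)^{n-k-j}\ls W_{k+j}(K)^{n-j}.$$
Raising the first inequality of the corollary to the power $n-j$ and combining with this estimate produces exactly
$$W_j(K)^{n-k-j}\ls\bigl(\omega_n^k\,(c^{\prime}_{n,k,j})^{n-j}\bigr)^{-1}\max_{E\in G_{n,n-k}}W_j(K\cap E)^{n-j},$$
which is the second displayed inequality.

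There is no real obstacle here: the geometric content (Fradelizi's centered-body volume bound feeding into the Dann--Paouris--Pivovarov inequality of Theorem~\ref{th:new-1}) has already been carried out in Corollary~\ref{cor:new-2}, and the integral-geometric manipulation via Blaschke--Petkantschin is identical to that in Theorem~\ref{th:new-5}. The only small point to be careful about is confirming that every monotonicity step (in particular $W^{(n-k)}_j(\mathrm{conv}\{0,x_1,\ldots ,x_{n-k}\})\ls W_j(K\cap F)$ and the Aleksandrov inequality in its precise form) is stated with the right normalization, but these are the same checks already performed in the symmetric case.
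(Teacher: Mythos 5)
Your proposal is correct and follows exactly the route the paper intends but leaves implicit: the paper states Corollary~\ref{cor:new-3} without proof, noting only that it is the analogue of Theorem~\ref{th:new-5} and Theorem~\ref{th:intro-3} obtained by starting from Corollary~\ref{cor:new-2}. Your transcription of the Blaschke--Petkantschin step (with the monotonicity bound $W^{(n-k)}_j(\mathrm{conv}\{0,x_1,\ldots,x_{n-k}\})\ls W_j(K\cap F)$ and the reverse Blaschke--Petkantschin identity recovering $|K|^{n-k}$) and the Aleksandrov step $\omega_n^k W_j(K)^{n-k-j}\ls W_{k+j}(K)^{n-j}$ are both exactly what is needed and are carried out with the correct normalizations.
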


Following the proof of Theorem~\ref{th:new-3} we can also obtain the following.

\begin{theorem}\label{th:new-6}Let $K$ be a centrally symmetric convex body in ${\mathbb R}^n$.
Then, for all $0\ls j\ls n-1$ and $N\gr n+1$ we have that
\begin{equation}\label{eq:46}
c_{n,N,j}|K|^NW_j(K)\ls\int_{K}\cdots\int_{K}W_j({\rm conv}\{x_1,\ldots ,x_N\})\,dx_N\cdots dx_1
\end{equation}
where $c_{n,N,j}=\frac{1}{\omega_{n-j}\binom{n}{j}}\mathcal F_{C,1}(\mathds{1}_{B^{n-j}_2};N)$ with $C={\rm conv}\{e_1,\ldots ,e_N\}$.
If $K$ is assumed to be centered then we obtain a similar inequality with constants
$c^{\prime}_{n,N,j}=c_{n,N,j}\left(\frac{n+1}{j+1}\right)^{-j}$.
\end{theorem}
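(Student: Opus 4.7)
The plan is to mimic the proof of Theorem~\ref{th:new-3} with the substitutions $k\leftrightarrow 0$, $n-k\leftrightarrow N$, and $C={\rm conv}\{e_1,\ldots,e_N\}\subset\mathbb{R}^N$ (dropping the origin as a vertex), so that $[x_1,\ldots,x_N]C={\rm conv}\{x_1,\ldots,x_N\}$ agrees exactly with the random polytope in the statement.

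First, fix $F\in G_{n,n-j}$ and introduce the marginal $f:F\to\mathbb{R}_+$, $f(x)=|K\cap(x+F^\perp)|$, so that $\|f\|_1=|K|$ by Fubini and $\|f\|_\infty=f(0)=|K\cap F^\perp|$ by Brunn's concavity principle (using central symmetry). Apply Theorem~\ref{th:new-1} with $d=n-j$, $q=N$, $p=1$, and $C$ as above. Since $N\gr n+1>n-j$, the parameter $m=\min\{N,n-j,N-1\}=n-j$, so $m/d=1$ and
$$\mathcal{F}_{C,1}(f;N)\gr\frac{|K|}{\omega_{n-j}\,|K\cap F^\perp|}\,\mathcal{F}_{C,1}(\mathds{1}_{B_2^{n-j}};N).$$
Rewriting the left side by pushing the integrals forward through $P_F$, exactly as in the proof of Theorem~\ref{th:new-3}, identifies it with $|K|^{-N}\int_{K^N}|P_F({\rm conv}\{x_1,\ldots,x_N\})|\,dx$.

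The remaining steps are Rogers--Shephard and Kubota. Using $|P_F(K)|\,|K\cap F^\perp|\ls\binom{n}{j}|K|$ (with $\dim F^\perp=j$), bound $|K|/|K\cap F^\perp|\gr|P_F(K)|/\binom{n}{j}$, and integrate over $F\in G_{n,n-j}$. Fubini with Kubota's formula~\eqref{eq:kubota} turns the left side into a multiple of $\int_{K^N}W_j({\rm conv}\{x_1,\ldots,x_N\})\,dx$, while Kubota applied to $P_F(K)$ turns the right side into a multiple of $W_j(K)$. The common factor $\omega_{n-j}/\omega_n$ cancels from both sides, leaving precisely $c_{n,N,j}|K|^NW_j(K)$ on the right with $c_{n,N,j}=\frac{1}{\omega_{n-j}\binom{n}{j}}\mathcal{F}_{C,1}(\mathds{1}_{B_2^{n-j}};N)$, as claimed. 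For centered (not necessarily symmetric) $K$, the only step that changes is the bound on $\|f\|_\infty$: Brunn is unavailable, but Fradelizi's inequality~\eqref{eq:fradelizi-1} with $k=n-j$ gives $\|f\|_\infty\ls\left(\tfrac{n+1}{j+1}\right)^j|K\cap F^\perp|$, and this loss propagates through as the factor $\left(\tfrac{n+1}{j+1}\right)^{-j}$ in the final constant $c'_{n,N,j}$.

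I do not expect a serious obstacle: the analytic content is carried entirely by Theorem~\ref{th:new-1}, and what remains is the same bookkeeping that powered the proof of Theorem~\ref{th:new-3}. The one point requiring care is the dimensional reckoning --- the hypothesis $N\gr n+1$ is exactly what forces $m=n-j$, hence $m/d=1$; without this, a residual ratio $|K|/|K\cap F^\perp|$ to a positive power would survive and spoil the constant.
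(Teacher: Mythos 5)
Your proof is correct and takes essentially the same route the paper indicates, namely adapting the argument of Theorem~\ref{th:new-3}. The key modifications you identify are exactly the right ones: use $q=N$ points and the simplex $C={\rm conv}\{e_1,\ldots,e_N\}$ without the origin (so that $[x_1,\ldots,x_N]C={\rm conv}\{x_1,\ldots,x_N\}$), and project onto $F\in G_{n,n-j}$ so that Kubota yields $W_j$ directly without needing the dimension-conversion Lemma~\ref{lem:new-2}. Your dimensional reckoning ($\dim C=N-1$ and $N\gr n+1$ forcing $m=\min(N,n-j,N-1)=n-j=d$, hence $m/d=1$) is the crux, and you flag it appropriately. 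The Brunn argument for $\|f\|_\infty=|K\cap F^\perp|$ in the symmetric case, the Rogers--Shephard/Spingarn bound $|P_F(K)|\,|K\cap F^\perp|\ls\binom{n}{j}|K|$, and the two Kubota applications with the cancellation of $\omega_{n-j}/\omega_n$ all produce precisely the stated constant $c_{n,N,j}=\frac{1}{\omega_{n-j}\binom{n}{j}}\mathcal F_{C,1}(\mathds{1}_{B^{n-j}_2};N)$; the replacement of Brunn by Fradelizi with $F\in G_{n,n-j}$ (section dimension $j$) correctly gives the extra factor $\left(\tfrac{n+1}{j+1}\right)^{-j}$ for the centered case.
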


\begin{remark}\rm Hartzoulaki and Paouris proved in \cite{Hartzoulaki-Paouris-2003} the inequality
\begin{equation}\label{eq:47}
A_{n,N,j,p}|K|^{N+\frac{(n-j)p}{n}}\ls\int_{K}\cdots\int_{K}W_j({\rm conv}\{x_1,\ldots ,x_N\})^pdx_1\cdots dx_N
\end{equation}
for every convex body $K$ in $\mathbb{R}^n$ and any $N\gr n+1$ and $p>0$, where
$$A_{n,N,j,p}=\int_{D_n}\cdots\int_{D_n}W_j({\rm conv}\{x_1,\ldots ,x_N\})^pdx_1\cdots dx_N$$
and $D_n$ is the Euclidean ball of volume $1$ in $\mathbb{R}^n$.
From the Aleksandrov-Fenchel inequality
\begin{equation*}
\omega_n^\frac{j}{n}|K|^\frac{n-j}{n}\ls W_j(K)
\end{equation*}
and \eqref{eq:46} we see that in the case $p=1$, apart from the values of the constants $c_{n,N,j}$ and $c^{\prime}_{n,N,j}$,
Theorem~\ref{th:new-6} provides a strengthened version of \eqref{eq:47} as explained in the introduction.
\end{remark}

\bigskip

\noindent {\bf Acknowledgement.} The author acknowledges support by the Hellenic Foundation for
Research and Innovation (H.F.R.I.) under the ``First Call for H.F.R.I.
Research Projects to support Faculty members and Researchers and
the procurement of high-cost research equipment grant" (Project
Number: 1849).

\bigskip

\footnotesize
\bibliographystyle{amsplain}

\medskip

\thanks{\noindent {\bf Keywords:} Convex body, volume, surface area, quermassintegrals, sections and projections.

\smallskip

\thanks{\noindent {\bf 2010 MSC:} Primary 52A20; Secondary 46B06, 52A40, 52A38, 52A23.}

\bigskip

\bigskip

\noindent \textsc{Dimitris-Marios \ Liakopoulos}: Department of
Mathematics, National and Kapodistrian University of Athens, Panepistimioupolis 157-84,
Athens, Greece.

\smallskip

\noindent \textit{E-mail:} \texttt{dimliako1@gmail.com}

\bigskip

\end{document}